\newtheorem{theorem}{Theorem} 	      	      	                              % Theorem environment
\newtheorem{corollary}[theorem]{Corollary}     	      	      	      	      % Corollary environment
\newtheorem{lemma}[theorem]{Lemma}     	       	      	      	      	      % Lemma environment
\newtheorem{proposition}[theorem]{Proposition} 	      	      	      	      % Proposition environment
\newtheorem*{remark}{Remark}                                                  % Remark environment
\newcommand{\ol}[1]{\overline{#1}}                                            % Overline
\newcommand{\ul}[1]{\underline{#1}}                                           % Underline
\newcommand{\mf}[1]{\mathfrak{#1}}                                            % Mathfrak
\newcommand{\mc}[1]{\mathcal{#1}}                                             % Mathcal
\newcommand{\paren}[1]{\left(#1\right)}                                       % Resized parentheses
\newcommand{\brak}[1]{\left[#1\right]}                                        % Resized brackets
\newcommand{\brac}[1]{\left\{#1\right\}}                                      % Resized braces
\newcommand{\abs}[1]{\left|#1\right|}                                         % Resized absolute value
\newcommand{\brcmp}[1]{\left\langle#1\right\rangle}                           % Resized angle brackets
\newcommand{\nabs}[1]{\left\|#1\right\|}                                      % Resized norm
\newcommand{\valat}[1]{\left.#1\right|}                                       % Value at
\newcommand{\R}{\mathbb{R}}                                                   % Real numbers
\newcommand{\Sph}{\mathbb{S}}                                                 % Unit sphere
\DeclareMathOperator{\trace}{tr}                                              % Trace
\newcommand{\lapl}{\Delta}                                                    % Laplacian
\newcommand{\nasla}{\slashed{\nabla}} 	      	      	      	      	      % Slashed nabla
\newcommand{\lasl}{\slashed{\lapl}} 	      	      	      	      	      % Slashed Laplacian
\newcommand{\sss}[2]{#1^{\scriptscriptstyle (#2)}{}}                          % System superscript 
\begin{document}

\title[A Generalized Representation Formula]{A Generalized Representation Formula for Systems of Tensor Wave Equations}
\author{Arick Shao}
\address{Department of Mathematics, Princeton University, Princeton NJ 08544}
\email{aricks@math.princeton.edu}
\subjclass[2000]{35C15}

\begin{abstract}
In this paper, we generalize the Kirchhoff-Sobolev parametrix of Klainerman and Rodnianski in \cite{kl_rod:ksp} to systems of tensor wave equations with additional first-order terms.
We also present a different derivation, which better highlights that such representation formulas are supported entirely on past null cones.
This generalization of \cite{kl_rod:ksp} is a key component for extending Klainerman and Rodnianski's breakdown criterion result for Einstein-vacuum spacetimes in \cite{kl_rod:bdc} to Einstein-Maxwell and Einstein-Yang-Mills spacetimes.
\end{abstract}

\maketitle

\section{Introduction}

Let $(M, g)$ denote a $(1+3)$-dimensional time-oriented Lorentzian manifold, with Levi-Civita connection $D$ and Riemann curvature $R$.
Also, fix an integer $n > 0$, along with $n$ integers $\sss{r}{1}, \ldots \sss{r}{n} \geq 0$.
For each $m, c \in \{1, \ldots, n\}$, we let $\sss{\Phi}{m}$ and $\sss{\Psi}{m}$ denote tensor fields on $M$ of rank $\sss{r}{m}$, and we let $\sss{P}{mc}$ denote a tensor field on $M$ of rank $1 + \sss{r}{m} + \sss{r}{c}$.
\footnote{We surround the ``indices" $m$ and $c$ with ``$(\cdot)$" in order to distinguish them from tensorial indices and their associated Einstein summation conventions.}
Assume that these objects satisfy the system
\begin{equation}\label{eq.tensor_wave_system} \sss{\mc{L}}{m} \sss{\Phi}{m}_I = \Box_g \sss{\Phi}{m}_I + \sum_{c = 1}^n \sss{P}{mc}_{\mu I}{}^J D^\mu \sss{\Phi}{c}_J = \sss{\Psi}{m}_I \text{,} \qquad 1 \leq m \leq n \end{equation}
of tensor wave equations on $M$, where $\Box_g = g^{\alpha\beta} D_{\alpha\beta}$ is the covariant wave operator, and where $I$ and $J$ are collections of $\sss{r}{m}$ and $\sss{r}{c}$ indices, respectively.
\footnote{In accordance with Einstein summation conventions, the repeated indices $J$ in \eqref{eq.tensor_wave_system} denote summations over all possible values for $J$.}
Note that if $n = 1$ and $\sss{P}{11}$ vanishes, then \eqref{eq.tensor_wave_system} reduces to the tensorial wave equation
\begin{equation}\label{eq.tensor_wave} \square_g \Phi = \Psi \text{.} \end{equation}

The aim of this paper is to generalize the representation formula of \cite{kl_rod:ksp}, which handled the setting \eqref{eq.tensor_wave}, to also treat systems of the form \eqref{eq.tensor_wave_system}.
In other words, we determine a formula for the $\sss{\Phi}{m}$'s at a point $p \in M$ in terms of the $\sss{\Phi}{m}$'s and $\sss{\Psi}{m}$'s along a portion of the regular past null cone about $p$.
In addition, we present a derivation different in nature from that of \cite{kl_rod:ksp}, which allows us to weaken the assumptions required for this formula to be valid.
Lastly, like in \cite{kl_rod:ksp}, we will extend this representation formula to arbitrary vector bundles.

\subsection{Prior Results}

The model problem for \eqref{eq.tensor_wave_system} is the scalar wave equation
\begin{equation}\label{eq.wave_eq_mink} \square \phi = \psi \text{,} \qquad \phi, \psi \in C^\infty\paren{\R^{1+3}} \end{equation}
on the Minkowski spacetime $\R^{1+3}$.
Suppose $\phi$ solves \eqref{eq.wave_eq_mink}, with initial data
\[ \valat{\phi}_{t = 0} = \alpha_0 \in C^\infty\paren{\R^3} \text{,} \qquad \valat{\partial_t \phi}_{t = 0} = \alpha_1 \in C^\infty\paren{\R^3} \text{.} \]
From standard theory, cf. \cite[Sec. 2.4]{ev:pde}, at a point $(t, x) \in (0, \infty) \times \R^3$, we can express $\phi(t, x)$ explicitly in terms of the initial data $\alpha_0$ and $\alpha_1$.

Recall that we can decompose $\phi = \phi_1 + \phi_2$, where $\phi_1$ satisfies \eqref{eq.wave_eq_mink} but with zero initial data, while $\phi_2$ satisfies the homogeneous equation $\square \phi_2 \equiv 0$ with initial data $\alpha_0$ and $\alpha_1$.
The term $\phi_2$ can be explicitly expressed using Kirchhoff's formula,
\[ \phi_2\paren{t, x} = \frac{1}{4 \pi t^2} \int_{\partial B\paren{x, t}} \brak{t \alpha_1\paren{y} + \alpha_0\paren{y} + \paren{y - x} \cdot \nabla \alpha_0\paren{y}} d\sigma\paren{y} \text{,} \]
where $B(x, t)$ is the ball in $\R^3$ of radius $t$ centered at $x$, and where $d\sigma$ is the standard surface measure for $\partial B(x, t)$.
In addition, from Duhamel's principle,
\[ \phi_1\paren{t, x} = \frac{1}{4 \pi} \int_0^t \int_{\partial B\paren{x, r}} \frac{\psi\paren{y, t - r}}{r} \cdot d\sigma\paren{y} dr = \frac{1}{4 \pi} \int_{B\paren{x, t}} \frac{\psi\paren{y, t - \abs{y - x}}}{\abs{y - x}} \cdot dy \text{.} \]
Note that this can be interpreted as an integral over the null cone segment
\[ N^-_0\paren{t, x} = \brac{\paren{s, y} \in \R \times \R^3 \mid 0 \leq s \leq t \text{, } \abs{y - x} = t - s} \text{.} \]

There exist many geometric analogues for the above model result, which provide possibilities for studying wave equations in other settings.
An important early example is the result of S. Sobolev in \cite{sob:ksp}, in which a first-order parametrix was applied in order to prove well-posedness for second-order linear wave equations with variable coefficients.
Y. Choqu\'et-Bruhat made use of a similar representation formula in her celebrated local existence result for the Einstein-vacuum equations; see \cite{cb:le_einst}.
More recently, Chrusciel and Shatah, in \cite{chru_sh:ym_curv}, used the Hadamard-type parametrix in \cite{fr:wv_eq} in order to extend the classical global existence result of Eardley and Moncrief for the Yang-Mills equations, cf. \cite{ea_mo:l_ymh, ea_mo:g_ymh}, to globally hyperbolic curved spacetimes.

In contrast to the first-order variants of \cite{cb:le_einst, sob:ksp}, which we collectively refer to as ``Kirchhoff-Sobolev-type" parametrices, the Hadamard-type parametrix of \cite{fr:wv_eq} achieved greater precision by making use of infinitely many derivatives of the metric $g$ and requiring the convexity of the domain under consideration.
For instance, since the geometric wave equation no longer obeys the strong Huygens principle, the Hadamard parametrix depends on all points in the causal past $J^-(z)$ of the point $z$ under consideration.
Consequently, geodesic convexity is required to make sense of the formula.
This differs strongly from Minkowski spacetime, in which the fundamental solution in $(1 + 3)$-dimensions is supported entirely on the past null cone.
These smoothness and convexity restrictions, however, can often be undesirable in the context of nonlinear problems in partial differential equations.

We now focus our attention on \cite[Thm. 3.11]{kl_rod:ksp}, which presented a Kirchhoff-Sobolev parametrix directly handling covariant tensorial wave equations on arbitrary $(1 + 3)$-dimensional Lorentzian manifolds.
For convenience, we refer to this representation formula as {\bf KR}.
In particular, {\bf KR}, applied at a point $p \in M$, enjoys the following features:
\begin{itemize}
\item {\bf KR} directly handles the tensorial wave equation \eqref{eq.tensor_wave} in a completely covariant fashion, in particular without reducing to a scalar wave equation.

\item The parametrix is supported entirely on the past null cone of $p$.
Indeed, the formula is expressed as integrals of $\Phi$ and $\Psi$ along this cone.

\item Since the past null cone can degenerate away from $p$, then {\bf KR} is valid only locally near $p$, on a ``regular" portion $\mc{N}^-(p)$ of the cone.
In other words, the validity of {\bf KR} is constrained by the regularity of the null exponential map, a weaker condition than the geodesic convexity assumption of \cite{fr:wv_eq}.

\item The formula {\bf KR} contains ``error terms", expressed as integrals along $\mc{N}^-(p)$ of $\Phi$ along with various geometric quantities on $\mc{N}^-(p)$.
These are a result of the nontrivial geometry of the spacetime.

\item The formula {\bf KR} depends only on quantities defined on $\mc{N}^-(p)$, i.e., it is independent of extensions of any quantities to neighborhoods of $\mc{N}^-(p)$.

\item The formula {\bf KR} can be systematically generalized to covariant wave equations on arbitrary vector bundles over $M$ with a bundle metric and a compatible connection; see \cite[Thm. 4.1]{kl_rod:ksp}.
\end{itemize}

For instance, \cite[Thm. 4.1]{kl_rod:ksp} can be used to reprove global existence for the Yang-Mills equations in $\R^{3+1}$ in a similar manner as in \cite{ea_mo:g_ymh}; see \cite[Sec. 5]{kl_rod:ksp}.
In addition, since {\bf KR} uses fully tensorial and covariant methods, then the above can be achieved without reference to Cronstr\"om gauges.
This ``gauge-invariant" method can likely be extended to treat curved spacetimes as well.

Moreover, {\bf KR} was applied in the proof of the breakdown criterion for the Einstein-vacuum equations in \cite{kl_rod:bdc}.
\footnote{The entirety of the proof of the breakdown/continuation argument stated in \cite{kl_rod:bdc}, however, spans multiple papers, including \cite{kl_rod:cg, kl_rod:glp, kl_rod:stt, kl_rod:ksp, kl_rod:rin, kl_rod:bdc, wang:cg, wang:cgp}.}
The main result of \cite{kl_rod:bdc} is the following:

\begin{theorem}\label{thm.bdc_vacuum}
Suppose $(M, g)$ is an existing Einstein-vacuum spacetime, given as a constant mean curvature foliation
\[ M = \bigcup_{t_0 < \tau < t_1} \Sigma_\tau \text{,} \qquad t_0 < t_1 < 0 \text{,} \]
where each $\Sigma_\tau$ is a compact spacelike hypersurface of $M$ satisfying the constant mean curvature condition $\trace k \equiv \tau$, and where $k$ denotes the second fundamental form of $\Sigma_\tau$ in $M$.
In addition, assume the following criterion:
\[ \nabs{k}_{L^\infty\paren{M}} + \nabs{\nabla \paren{\log n}}_{L^\infty\paren{M}} < \infty \text{.} \]
Then, $(M, g)$ can be extended as a CMC foliation to some time $t_1 + \epsilon$.
\end{theorem}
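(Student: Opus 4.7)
The plan is to reduce Theorem \ref{thm.bdc_vacuum} to a pointwise bound on the spacetime curvature $R$ via a continuation argument, and then to obtain that pointwise bound using the representation formula \textbf{KR} (or rather, its generalization developed in this paper) applied to a suitable wave equation derived from the Bianchi identities. The starting point is a standard continuation result for the Einstein-vacuum equations in CMC gauge: if $\nabs{R}_{L^\infty(M)}$, together with a controllable amount of derivatives, stays bounded on $[t_0, t_1)$, then the foliation extends past $t_1$. Thus the entire content of the theorem is to upgrade the pointwise hypotheses on $k$ and $\nabla(\log n)$ to a pointwise bound on the curvature tensor $R$ itself.

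First, I would use the hypothesis $\nabs{k}_{L^\infty}, \nabs{\nabla(\log n)}_{L^\infty} < \infty$ together with Bel-Robinson energy estimates to control $L^2$ norms of $R$ and its first few derivatives on the slices $\Sigma_\tau$. Second, since $(M,g)$ is Ricci-flat, the twice-contracted Bianchi identity $D^\mu R_{\mu\nu\alpha\beta} = 0$, combined with the first Bianchi identity, yields a covariant tensor wave equation of the schematic form $\Box_g R = R \star R$ (or, more usefully for the method of this paper, a first-order system for $(R, DR)$ of the type \eqref{eq.tensor_wave_system}). Third, for each point $p \in M$ with $\tau(p)$ close to $t_1$, I would apply the generalized Kirchhoff-Sobolev formula of this paper to that system in order to express $R(p)$ as an integral over the regular part $\mc{N}^-(p)$ of the past null cone, consisting of a data contribution on a terminal sphere plus bulk integrals of $R \star R$ and the geometric error terms inherent in \textbf{KR}.

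Fourth, to make these integrals useful, one needs quantitative control of the geometry of $\mc{N}^-(p)$: a uniform lower bound on the null injectivity radius from $p$, and bounds on the Ricci coefficients (null second fundamental form, torsion, mass aspect) along the cone. This is obtained by combining comparison-type arguments for the null exponential map with transport equations along the null generators, fed by the $L^2$ curvature control from step one. Fifth, with this geometric control in hand, the integrals in \textbf{KR} can be estimated using trace inequalities and Sobolev embeddings on $\mc{N}^-(p)$, producing an inequality of the schematic form $\nabs{R}_{L^\infty(\Sigma_\tau)} \lesssim C + \int_{t_0}^\tau f(\nabs{R}_{L^\infty(\Sigma_s)})\, ds$, which can be closed by a bootstrap on a suitable time interval. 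Finally, a standard Gronwall-type argument upgrades this to the uniform bound needed to invoke the continuation result.

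The main obstacle is step four: securing a uniform lower bound on the null injectivity radius of $\mc{N}^-(p)$, and more generally controlling the null cone geometry, under hypotheses that provide only $L^\infty$ control of $k$ and $\nabla(\log n)$ and, as a consequence, only $L^2$ control of $R$. At this low regularity the null exponential map need not be smooth, and the usual conjugate-point arguments break down; one must exploit the structure of the Raychaudhuri equation together with the fact that the wave equation for $R$ is built from the ambient connection, which is precisely where the flexibility of \textbf{KR} (it only needs data on $\mc{N}^-(p)$, not a convex neighborhood) becomes essential. The remaining steps, while technically heavy, are in spirit routine consequences of the representation formula and the CMC gauge, but this geometric control is the point where the deepest analytic work of the program must be invoked.
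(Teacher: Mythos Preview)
The paper does not contain a proof of Theorem~\ref{thm.bdc_vacuum}. This theorem is stated as a prior result of Klainerman and Rodnianski, cited from \cite{kl_rod:bdc}, and the paper explicitly notes (in the footnote immediately following the theorem) that the full proof spans the series of papers \cite{kl_rod:cg, kl_rod:glp, kl_rod:stt, kl_rod:ksp, kl_rod:rin, kl_rod:bdc, wang:cg, wang:cgp}. In this paper the theorem serves only as motivation and context for the representation formula, not as a result to be established here. So there is no ``paper's own proof'' against which to compare your proposal.

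That said, your outline is a broadly accurate sketch of the Klainerman--Rodnianski program: the continuation criterion reduces to a uniform pointwise bound on $R$; Bel--Robinson energies give $L^2$ control from the hypotheses; the Bianchi identities yield the tensorial wave equation \eqref{eq.wave_curv_vacuum}; the parametrix is applied on regular past null cones; and the hardest part is indeed the quantitative control of the null cone geometry (null injectivity radius, Ricci coefficients) under only $L^2$ curvature bounds, which is exactly the content of the cited companion papers.

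One correction is worth flagging. In the vacuum case the wave equation for $R$ is simply $\Box_g R \cong R \cdot R$, with no first-order terms; see \eqref{eq.wave_curv_vacuum}. There is no need to pass to a first-order system for $(R, DR)$ of the type \eqref{eq.tensor_wave_system}, and the paper explicitly says that the original parametrix \textbf{KR} of \cite{kl_rod:ksp}, not the generalization developed here, is what is used in \cite{kl_rod:bdc}. The generalized formula of this paper is designed for the Einstein--Maxwell and Einstein--Yang--Mills problems, where genuine first-order terms of the form $F \cdot D^2 F$ and $F \cdot DR$ appear; cf.\ \eqref{eq.wave_curv_em}. Invoking the system version for vacuum is harmless but superfluous, and slightly misrepresents how the argument actually runs.
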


In particular, {\bf KR} was utilized to derive $L^\infty$-bounds for the curvature, which was shown to satisfy a covariant tensorial wave equation; see \cite[Sec. 5]{kl_rod:bdc}.
These $L^\infty$-bounds were essential for obtaining uniform energy estimates along the timeslices of the spacetime.
Using local existence theory and various elliptic estimates, one could then derive Theorem \ref{thm.bdc_vacuum}.

\subsection{The Generalized Formula}

The parametrix {\bf KR}, however, is insufficient for handling the analogous breakdown problem for the Einstein-Maxwell (and similarly, the Einstein-Yang-Mills) setting.
The main result, discussed in \cite{shao:bdc_nv}, is the following:

\begin{theorem}\label{thm.bdc_maxwell}
Suppose $(M, g, F)$ is an existing Einstein-Maxwell spacetime, where $F$ is a $2$-form in $M$ representing the Maxwell field.
Assume $M$ is given as a CMC foliation as in Theorem \ref{thm.bdc_vacuum}, and assume the following criterion:
\[ \nabs{k}_{L^\infty\paren{M}} + \nabs{\nabla \paren{\log n}}_{L^\infty\paren{M}} + \nabs{F}_{L^\infty\paren{M}} < \infty \text{.} \]
Then, $(M, g, F)$ can be extended as a CMC foliation.
\end{theorem}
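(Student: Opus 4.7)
The plan is to follow the blueprint of Klainerman--Rodnianski's proof of Theorem \ref{thm.bdc_vacuum} in \cite{kl_rod:bdc}, with the modifications necessitated by the presence of the Maxwell field $F$. The overall continuation architecture -- local existence, uniform energy estimates on the timeslices $\Sigma_\tau$, and elliptic control of the lapse $n$ and second fundamental form $k$ on the CMC foliation -- carries over unchanged. What must be adapted is the derivation of $L^\infty$-bounds on spacetime curvature, which in the vacuum setting relied crucially on {\bf KR} and must now incorporate $F$.

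First, I would derive the system of tensor wave equations satisfied by the Einstein--Maxwell data. Since the Einstein equations algebraically determine the Ricci tensor from $F$ (with vanishing scalar curvature in this dimension), it suffices to control the Weyl tensor $W$ and $F$ itself. Twice-contracted Bianchi applied to the Riemann tensor yields a wave equation for $W$ whose inhomogeneity contains terms schematically of the form $\nabla F \cdot F$, $F \cdot F \cdot F$, and $W \cdot W$; the Maxwell equations $dF = 0$ and $d \star F = 0$ combine via the standard commutator identity to give $\square_g F \sim R \star F$. These two equations together fit the template \eqref{eq.tensor_wave_system} in the unknowns $(\sss{\Phi}{1}, \sss{\Phi}{2}) = (W, F)$, with coefficients $\sss{P}{mc}$ depending algebraically on $F$. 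The $\nabla F$ term in the equation for $W$ is precisely the kind of first-order coupling that the generalized formula of this paper is designed to accommodate but that lies outside the scope of {\bf KR}.

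Next, I would apply the generalized representation formula at an arbitrary $p \in M$ to express $W(p)$ and $F(p)$ as integrals over the regular portion $\mc{N}^-(p)$ of the past null cone. As in \cite{kl_rod:bdc, kl_rod:cg, kl_rod:rin}, the hypotheses $\nabs{k}_{L^\infty(M)} + \nabs{\nabla(\log n)}_{L^\infty(M)} < \infty$ furnish a uniform lower bound on the null injectivity radius and hence a uniform size for $\mc{N}^-(p)$, while the additional hypothesis $\nabs{F}_{L^\infty(M)} < \infty$ is exactly what is required to bound the new first-order coefficients $\sss{P}{mc}$ in the formula. Combined with the null cone geometric estimates of \cite{kl_rod:cg, kl_rod:glp, wang:cg, wang:cgp}, this should produce a uniform pointwise bound on $W$, which together with the Einstein equations yields a uniform pointwise bound on the full Riemann tensor.

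The main obstacle is handling the error integrals along $\mc{N}^-(p)$ arising from the first-order coupling, specifically the contribution of $\nabla F$ restricted to the cone: since only $\nabs{F}_{L^\infty}$ is assumed, $\nabla F$ cannot be controlled pointwise and must instead be estimated via flux integrals over $\mc{N}^-(p)$, using the Maxwell equations to trade normal derivatives for tangential ones in the spirit of the curvature flux bounds of \cite{kl_rod:bdc}. Once the $L^\infty$ bound on the Riemann tensor is in hand, Theorem \ref{thm.bdc_maxwell} follows from the same continuation argument as in the vacuum case, after trivially updating the Hamiltonian and momentum constraints, the lapse equation, and the energy estimates on $\Sigma_\tau$ to include the Maxwell stress--energy contributions.
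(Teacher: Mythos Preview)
The paper does not itself prove Theorem \ref{thm.bdc_maxwell}; it is stated as motivation, with the full argument deferred to \cite{shao:bdc_nv}. The paper does, however, sketch the intended strategy in the discussion surrounding \eqref{eq.wave_curv_em}, and your proposal departs from that sketch in a way that creates a genuine gap.

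Your choice of unknowns $(\sss{\Phi}{1}, \sss{\Phi}{2}) = (W, F)$ does not fit the template \eqref{eq.tensor_wave_system}. The Bianchi-derived wave equation for $W$ (or $R$) in Einstein--Maxwell contains not merely $F \cdot \nabla F$ but also $F \cdot D^2 F$, arising from $\nabla^2 \mathrm{Ric} \sim \nabla^2(F \cdot F)$; this is precisely the leading term displayed in \eqref{eq.wave_curv_em}. A second derivative of $F$ is second-order in your unknown $\sss{\Phi}{2} = F$, whereas the generalized parametrix of Theorem \ref{thm.ksp_gen} absorbs only \emph{first}-order couplings $\sss{P}{mc}_\mu D^\mu \sss{\Phi}{c}$. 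The paper's remedy is to take $\sss{\Phi}{2} = DF$ rather than $F$: then $F \cdot D^2 F = F \cdot D(DF)$ is a first-order term in $\sss{\Phi}{2}$ with coefficient $\sss{P}{12} \sim F$, and the companion equation $\Box_g (DF) \cong F \cdot DR + \ldots$ supplies the matching first-order coupling $\sss{P}{21} \sim F$ back into $\sss{\Phi}{1}$. Both coefficients are then controlled by the assumed bound $\|F\|_{L^\infty} < \infty$, which is what makes the transport system \eqref{eq.transport} for the $\sss{A}{m}$'s tractable.

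Relatedly, applying the representation formula to recover $F(p)$ is redundant, since $\|F\|_{L^\infty}$ is already hypothesized; the quantities one actually needs to bound pointwise are $R$ and $DF$. Having $DF$ as an unknown is also what allows the Eardley--Moncrief style flux argument to close on the quadratic nonlinearities $(R + DF)\cdot(R + DF)$ in \eqref{eq.wave_curv_em}, replacing the ad hoc flux treatment of $\nabla F$ you sketch at the end. The remainder of your outline (null injectivity radius, cone geometry via \cite{kl_rod:cg, kl_rod:glp, wang:cg, wang:cgp}, and the continuation machinery from \cite{kl_rod:bdc}) is in line with the intended argument.
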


The failure of {\bf KR} in the proof of Theorem \ref{thm.bdc_maxwell} is due to the presence of nontrivial first-order terms in the wave equations for the curvature and the Maxwell tensors.
More specifically, if we apply {\bf KR}, then we obtain null cone integrals containing ``bad" components of the derivative of the curvature, which we cannot control a priori using standard local energy estimates.
We also obtain equally untreatable ``bad" components for the Maxwell tensor.

In order to obtain Theorem \ref{thm.bdc_maxwell}, we must deal with the aforementioned first-order terms.
To achieve this, we generalize {\bf KR} to handle a system of tensor wave equations of the form \eqref{eq.tensor_wave_system}.
In particular, we must handle the first-order portion of \eqref{eq.tensor_wave_system} differently.
As we shall see later, the main ``trick" will be to eliminate those terms containing derivatives of the $\sss{\Phi}{m}$'s which are transverse to the null cone.
This is intuitively unsurprising, since such derivatives, in both flat and curved spacetimes, are generally the most difficult to control using local energy estimates.

Before delving fully into technical details, we first compare the approach of this generalized representation formula to that of its predecessor {\bf KR}:
\begin{itemize}
\item Similar to developments in \cite{fr:wv_eq}, we handle the contributions of the first-order terms by suitably altering the transport equation in \cite{kl_rod:ksp}.
We add corresponding terms to these transport equations, which include the $\sss{P}{mc}$'s.

\item Due to coupling, all the equations in \eqref{eq.tensor_wave_system} must be handled concurrently.
Consequently, the transport equation of \cite{kl_rod:ksp} becomes a coupled system of $n$ transport equations.

\item Like for {\bf KR}, the parametrix of this paper will be supported on regular past null cones and will depend entirely on quantities defined on these cones.
\end{itemize}

We opt for a different approach to the derivation of our formula than for that of {\bf KR}.
The main differences in principle between the two proofs are as follows:
\begin{itemize}
\item A major component in deriving {\bf KR} is the optical function, whose level sets are null cones.
Here, we avoid any reference to such a function.
As a result, we can eliminate some extraneous geometric assumptions needed in \cite{kl_rod:ksp}.

\item While \cite{kl_rod:ksp} makes heavy use of distributions on manifolds in an informal fashion, we take a more technically rigorous path and remain at the level of the calculus operations represented by such distributions.
In particular, rather than dealing with derivatives of the $\delta$-distribution like in \cite{kl_rod:ksp}, we deal with corresponding integrations by parts.

\item In our derivation, we integrate by parts only the derivatives in directions tangent to the null cone, while in \cite{kl_rod:ksp}, integration by parts is applied liberally to derivatives in all directions.
The latter approach results in terms depending on quantities off the null cone, which must then be cancelled through additional meticulous integrations by parts.
By exercising further discretion, we can avoid such troubles altogether.

\item In this paper, we will make explicit the dependence of the parametrix on the initial data.
\footnote{By ``initial data", we mean the values of the $\sss{\Phi}{m}$'s and $D \sss{\Phi}{m}$'s on a spherical cross-section of the past null cone.  These correspond $\alpha_0$ and $\alpha_1$ in the scalar Minkowski model problem.}
This is in contrast to \cite{kl_rod:ksp}, which focused only on the case of vanishing initial data.
\end{itemize}

As mentioned before, we obtain subtle improvements over the assumptions required for {\bf KR} to be valid.
Assumptions {\bf A1} and {\bf A2} of \cite[Sec. 2.1]{kl_rod:ksp}, assumed when deriving {\bf KR} at a point $p \in M$, postulated that certain local hyperbolicity and null regularity conditions hold for all points in some neighborhood of $p$.
In contrast, in our derivation, since we avoid the optical function and work exclusively on the null cone, we require only the null regularity condition for $p$.

We also previously noted that both the representation formula of this paper and {\bf KR} depend only on quantities defined on the null cone.
Since the derivation of {\bf KR} introduces a multitude of terms which depend on quantities off the cone, that the final result does not depend on quantities off the cone appears as a ``miracle" resulting from numerous cancellations.
In contrast, in our derivation, we automatically have at each step of the proof that every term is dependent only on quantities defined on the null cone.
Therefore, our derivation highlights this property of the parametrix as a natural rather than surprising consequence.

\subsection{The Main Result}

Now, we state an abridged schematic version of the main theorem of this paper.
The precise statement will be deferred until Theorem \ref{thm.ksp_gen}, after we develop the background and notations needed to fully describe the parametrix.

\begin{theorem}\label{thm.ksp_gen_pre}
Consider the system \eqref{eq.tensor_wave_system} on $(M, g)$, let $\mc{N}$ denote a ``regular" segment of the past null cone about $p \in M$, and let $\sss{J}{1}, \ldots, \sss{J}{n}$ be tensors at $p$ of the same ranks as $\sss{\Phi}{1}, \ldots, \sss{\Phi}{n}$, respectively.
Then, we have the schematic formula
\[ \sum_{m = 1}^n \sss{J}{m} \cdot \valat{\sss{\Phi}{m}}_p = - \sum_{m = 1}^n \int_{\mc{N}} \sss{A}{m} \cdot \sss{\Psi}{m} + \paren{\text{error terms}} + \paren{\text{initial data}} \text{,} \]
where:
\begin{itemize}
\item The $\sss{A}{m}$'s are tensor fields on $\mc{N}$ which satisfy a system of transport equations depending on the $\sss{J}{m}$'s and on the geometry of $\mc{N}$.

\item The error terms can be expressed as integrals of quantities on $\mc{N}$.

\item The initial data contribution can be expressed as integrals over the lower boundary of $\mc{N}$ (i.e., a spherical cross-section of the past null cone).
\end{itemize}
\end{theorem}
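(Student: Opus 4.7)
The plan is to introduce on $\mc{N}$ a null frame adapted to its generators, pair the left-hand side of \eqref{eq.tensor_wave_system} against auxiliary tensor fields $\sss{A}{m}$ on $\mc{N}$, and integrate by parts strictly in directions tangent to the cone. Following the spirit of \cite{kl_rod:ksp} but remaining at the level of ordinary calculus, the desired representation emerges by choosing the $\sss{A}{m}$'s so as to annihilate the coefficient of the ``transverse'' derivative $\nabla_{\ul{L}} \sss{\Phi}{c}$ in the resulting expression, leaving behind only quantities intrinsic to $\mc{N}$.

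First, I would parametrize $\mc{N}$ by the affine parameter $s$ of a null generator $L = \partial_s$ from $p$, let $\ul{L}$ be the transverse null field normalized by $g(L, \ul{L}) = -2$, and fix an orthonormal frame $e_A$ ($A = 1, 2$) on each cross-section $S_s$. Decomposing $\Box_g$ and each $\sss{P}{mc}_{\mu I}{}^J D^\mu$ in this frame, I would locate every contribution containing $\nabla_{\ul{L}} \sss{\Phi}{c}$: from $\Box_g \sss{\Phi}{m}$ these are the null Hessian term $-\nabla_L \nabla_{\ul{L}} \sss{\Phi}{m}$ together with a lower-order $-\tfrac{1}{2}\trace\chi \cdot \nabla_{\ul{L}} \sss{\Phi}{m}$, while from $\sss{P}{mc}_{\mu I}{}^J D^\mu \sss{\Phi}{c}$ the only such contribution is a multiple of the null component $\sss{P}{mc}_L$ paired with $\nabla_{\ul{L}} \sss{\Phi}{c}$. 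Every other derivative $\nabla_L$, $\nasla_A$, $\lasl$ that appears is tangential to $\mc{N}$, which is precisely why no extension of data off the cone will ultimately be needed.

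For $0 < \epsilon < T$ set $\mc{N}_\epsilon = \brac{\epsilon \leq s \leq T}$. I would then compute
\[ \sum_{m = 1}^n \int_{\mc{N}_\epsilon} \brcmp{\sss{A}{m}, \sss{\mc{L}}{m} \sss{\Phi}{m}} \, dV_{\mc{N}} \]
by integrating by parts in the tangential directions $L$ and $e_A$, which produces boundary contributions on $S_\epsilon$ and $S_T$. The $L$-integration converts $-\nabla_L \nabla_{\ul{L}} \sss{\Phi}{m}$ into $(\nabla_L \sss{A}{m}) \cdot \nabla_{\ul{L}} \sss{\Phi}{m}$, so that collecting every term multiplying $\nabla_{\ul{L}} \sss{\Phi}{c}$ and demanding its total coefficient vanish for each $c$ would yield a coupled transport system schematically of the form
\[ \nabla_L \sss{A}{m} + \tfrac{1}{2} \trace \chi \cdot \sss{A}{m} - \tfrac{1}{2} \sum_{c = 1}^n \sss{P}{cm}_L \cdot \sss{A}{c} = 0, \qquad 1 \leq m \leq n, \]
with suitable index contractions; this is the natural analogue of the single transport equation of \cite{kl_rod:ksp}, now coupled through the $\sss{P}{cm}$'s.

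I would prescribe the data for the $\sss{A}{m}$'s at the vertex so that, after a parallel identification along each generator, $s \sss{A}{m} \to \sss{J}{m}$ as $s \to 0^+$. Because $\trace\chi = 2/s + O(1)$ near $p$, the transport system has a regular singularity at $s = 0$, and standard ODE theory yields unique solutions with the prescribed asymptotics $\sss{A}{m} \sim s^{-1} \sss{J}{m}$, the $\sss{P}{cm}_L$ coupling entering only as a subleading perturbation. Combined with the area scaling $\abs{S_\epsilon} = O(\epsilon^2)$, the inner boundary contribution converges as $\epsilon \to 0$ to $\sum_m \sss{J}{m} \cdot \valat{\sss{\Phi}{m}}_p$; the $S_T$ boundary contribution is identified as the initial-data term; and the residual bulk integrals, which involve $\sss{A}{m}$ contracted with curvature- and connection-type quantities on $\mc{N}$ together with the tangential components of the $\sss{P}{mc}$'s, are packaged as the error terms. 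The step I expect to be most delicate is the vertex analysis: verifying that the coupled singular transport system admits solutions with exactly the right leading asymptotics and that the limit of the $S_\epsilon$-boundary integral cleanly produces $\sum_m \sss{J}{m} \cdot \valat{\sss{\Phi}{m}}_p$, without spurious contributions from either the $\sss{P}{cm}$ coupling or the subleading geometric corrections to $\trace\chi$.
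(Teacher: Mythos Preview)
Your proposal is essentially the paper's own approach: pair against auxiliary fields $\sss{A}{m}$ on $\mc{N}_\epsilon$, decompose $\Box_g$ in an adapted null frame, integrate by parts only in directions tangent to the cone, and choose the transport system for the $\sss{A}{m}$'s precisely to annihilate every bulk occurrence of $D_{\ul{L}}\sss{\Phi}{c}$. The transport equation you wrote and the normalization $s\,\sss{A}{m}\to\sss{J}{m}$ match the paper exactly (the paper works with a general foliating function $f$ rather than the affine parameter, but this is cosmetic).

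One point in your vertex analysis needs correction. The $S_\epsilon$-boundary term produced by the $L$-integration by parts of $-\nabla_L\nabla_{\ul{L}}\sss{\Phi}{m}$ is $\int_{S_\epsilon}\sss{A}{m}\cdot D_{\ul{L}}\sss{\Phi}{m}$, and this term \emph{vanishes} as $\epsilon\to 0$: it scales like $\epsilon^{-1}\cdot\epsilon^{2}=\epsilon$. The nonzero vertex contribution instead comes from a term you did not list, namely $-\tfrac{1}{2}(\trace\ul{\chi})\,\nabla_L\sss{\Phi}{m}$ in the null decomposition of $\Box_g$. This also carries a tangential $\nabla_L$, and integrating it by parts yields the boundary term $\tfrac{1}{2}\int_{S_\epsilon}(\trace\ul{\chi})\,\sss{A}{m}\cdot\sss{\Phi}{m}$. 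Since $\trace\ul{\chi}\sim -2/s$ near $p$, this combines with $\sss{A}{m}\sim s^{-1}\sss{J}{m}$ and $|S_\epsilon|\sim 4\pi\epsilon^{2}$ to give the finite limit $-4\pi\sum_m\sss{J}{m}\cdot\sss{\Phi}{m}|_p$. So the delicate vertex step is not so much the ODE asymptotics (which are benign, the $\sss{P}{cm}$ coupling being subleading as you say) as correctly tracking this $\trace\ul{\chi}$ boundary contribution.
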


In addition, in the final section, we will discuss extensions of the parametrix to covariant wave equations on sections of vector bundles, similar to \cite[Thm. 4.1]{kl_rod:ksp}, but once again with nontrivial first-order terms.
We also discuss the main theorem in the context of this extended framework, and we discuss how this extension can be applied to treat the Einstein-Yang-Mills breakdown problem.

\subsection*{Acknowledgements}

The author would like to thank Professor Sergiu Klainerman for his insights regarding this topic, and for hours of helpful discussions.

\section{Geometry of Regular Null Cones}

Fix $p \in M$, and consider the \emph{null exponential map} $\ul{\exp}_p$ about $p$, defined as the restriction of the exponential map about $p$ to the past null cone $\mf{N}$ of the tangent space $T_p M$.
\footnote{Here, we assume $\mf{N}$ does not include the origin of $T_p M$.}
Define the past null cone $N^-(p)$ of $p$ to be the image of $\ul{\exp}_p$.
Note that $N^-(p)$ is ruled by the past inextendible null geodesics beginning at $p$.

We call a neighborhood $U$ in $\mf{N}$ of the origin \emph{star-shaped} iff for each $q \in U$, the open segment between the origin and $q$ is also contained in $U$.
Fix such a star-shaped $U \subseteq \mf{N}$, and assume $\ul{\exp}_p$ is also a global diffeomorphism on $U$.
From now on, we shall denote the image $\ul{\exp}_p(U)$ by $\mc{N}^-(p)$, which we refer to as the \emph{regular past null cone}.
\footnote{In other words, we can think of $\mc{N}^-(p)$ as a ``past null convex" set.}
In particular, $\mc{N}^-(p)$ is a smooth null hypersurface of $M$.

\begin{remark}
Recall that since one can find convex subsets about any point, then one can always construct such a set $\mc{N}^-(p)$.
\end{remark}

\begin{remark}
For nonpathological spacetimes, we can systematically construct such $\mc{N}^-(p)$ via global considerations.
For instance, in the globally hyperbolic setting, we can determine $\mc{N}^-(p)$ from the past null boundary $J^-(p) - I^-(p)$, where $I^-(p)$ and $J^-(p)$ are the chronological and causal pasts of $p$, respectively; see \cite{haw_el:gr, on:srg}.
In practice, this is the situation adopted in most applications.
\end{remark}

We remark here that a loss of such regularity occurs at what are called \emph{terminal points} of $N^-(p)$.
At such a point $z$, one of the following scenarios hold:
\begin{itemize}
\item Two distinct past null geodesics from $p$ intersect at $z$.
In other words, the map $\ul{\exp}_p$ fails to be one-to-one at $z$.

\item The pair $p$ and $z$ are null conjugate points.
In other words, the map $\ul{\exp}_p$ fails to be nonsingular at $z$.
\end{itemize}

From now on, we shall deal only with the regular portion $\mc{N}^-(p)$, on which we have a smooth structure.
Furthermore, since tangent null vectors in $\mc{N}^-(p)$ have vanishing Lorentzian ``length" and are orthogonal to $\mc{N}^-(p)$, they cannot be normalized without introducing vectors transversal to $\mc{N}^-(p)$.
Consequently, in our treatment, we will require an additional choice of a future timelike unit vector $\mf{t} \in T_pM$.
Here, we fix such a $\mf{t}$ for the remainder of this paper.

\subsection{Null Generators}

We define the \emph{null generators} of $\mc{N}^-(p)$ to be the inextendible past null geodesics $\gamma$ on $M$ which satisfy $\gamma(0) = p$ and $g(\gamma^\prime(0), \mf{t}) = 1$.
We can smoothly parametrize these generators by $\Sph^2$ using the following process.
If we choose an orthonormal basis $e_0, \ldots, e_3$ of $T_pM$, with $e_0 = \mf{t}$, then we can identify each $\omega \in \Sph^2$ with the null generator $\gamma_\omega$ satisfying $\gamma_\omega^\prime(0) = -e_0 + \omega^k e_k$.
For convenience, we assume such a parametrization of the null generators of $\mc{N}^-(p)$, and we denote by $\gamma_\omega$, $\omega \in \Sph^2$, the null generator corresponding to $\omega$.

\begin{remark}
The objects on $\mc{N}^-(p)$ that we will discuss can of course be defined independently of any parametrization of the null generators.
However, for ease of notation, we will work explicitly with $\Sph^2$.
\end{remark}

In addition, we define the vector field $L$ on $\mc{N}^-(p)$ to be the tangent vector fields of the null generators, i.e., we define $L|_{\gamma_\omega(v)} = \gamma_\omega^\prime(v)$ for any $\omega \in \Sph^2$.
Note in particular that $L$ is a geodesic vector field.

\subsection{Spherical Foliating Functions}

We say that a function $f \in C^\infty(\mc{N}^-(p))$ is a \emph{foliating function} of $\mc{N}^-(p)$ iff $f$ satisfies the following conditions:
\begin{itemize}
\item For every $\omega \in \Sph^2$,
\[ \lim_{v \searrow 0} \valat{f}_{\gamma_\omega\paren{v}} = 0 \text{.} \]

\item The function
\begin{equation}\label{eq.null_lapse} \vartheta = \vartheta_f = \paren{L f}^{-1} \in C^\infty\paren{\mc{N}^-\paren{p}} \end{equation}
is everywhere positive on $\mc{N}^-(p)$.
In other words, $f$ is strictly increasing along every null generator of $\mc{N}^-(p)$.

\item There is a constant $\vartheta_0 > 0$ such that for all $\omega \in \Sph^2$,
\[ \lim_{v \searrow 0} \valat{\vartheta}_{\gamma_\omega\paren{v}} = \vartheta_0 \text{.} \]

\item For every $\omega \in \Sph^2$, the limit
\[ \lim_{s \searrow 0} \valat{L \vartheta}_{\gamma_\omega\paren{s}} \]
exists and is finite.
\end{itemize}
The function $\vartheta$ defined in \eqref{eq.null_lapse} is called the \emph{null lapse} of $f$.

\begin{remark}
We can in fact weaken the third condition in the above definition so that $\vartheta$ converges to different limits along various null generators.
This generalization, however, is currently without practical applications and complicates various initial limit computations.
Therefore, we restrict ourselves to the simpler definition.
\end{remark}

In the remainder of this paper, we will let $f$ denote an arbitrary ``default" foliating function for $\mc{N}^-(p)$, and we let $\vartheta = \vartheta_f$.
The positivity of $\vartheta$ implies $df$ is everywhere nonvanishing, so the level sets of $f$, denoted
\[ \mc{S}_v = \mc{S}^f_v = \brac{q \in \mc{N}^-\paren{p} \mid f\paren{q} = v} \text{,} \qquad v > 0 \text{,} \]
form a family of hypersurfaces of $\mc{N}^-(p)$.
Since $L$ represents the unique null direction tangent to $\mc{N}^-(p)$, and the positivity of $\vartheta$ implies that $L$ is transverse to each $\mc{S}_v$, we can conclude that each $\mc{S}_v$ is spacelike, i.e., Riemannian.
We shall adopt the following notational conventions: we let $\lambda$ and $\nasla$ denote the induced metrics and the Levi-Civita connections on the $\mc{S}_v$'s, respectively.

Define the quantity $\mf{i}(p) = \mf{i}_f(p)$ to be the supremum of all $v > 0$ such that for every $\omega \in \Sph^2$, there exists a point $q$ on both $\gamma_\omega$ and $\mc{N}^-(p)$ such that $f(q) = v$.
This can be interpreted as $N^-(p)$ remaining regular up to $f$-value $\mf{i}(p)$.
\footnote{If $\mc{N}^-(p)$ is defined in terms of the global geometry of $M$, as was discussed before, then $\mf{i}(p)$ corresponds to the ``null injectivity radius" of $\mc{N}^-(p)$ with respect to $f$.}
Note that the definition of $\mf{i}(p)$ and the positivity of $\vartheta$ imply that $\mc{S}_v$ is diffeomorphic to $\Sph^2$ whenever $0 < v < \mf{i}(p)$.

In addition, for any $0 < v_1 \leq \mf{i}(p)$, we define the null cone segment
\[ \mc{N}^-\paren{p; v_1} = \mc{N}^-_f\paren{p; v_1} = \brac{q \in \mc{N}^-\paren{p} \mid f\paren{q} < v_1} \text{,} \]
while for any $0 < v_1 < v_2 \leq \mf{i}(p)$, we define
\[ \mc{N}^-\paren{p; v_1, v_2} = \mc{N}^-_f\paren{p; v_1, v_2} = \brac{q \in \mc{N}^-\paren{p} \mid v_1 < f\paren{q} < v_2} \text{.} \]

The most natural example of a foliating function is the \emph{affine parameter} $s$ on $\mc{N}^-(p)$, given by $s(\gamma_\omega(v)) = v$ for each $\omega \in \Sph^2$.
Note in particular that $s$ converges to $0$ at $p$ and $\vartheta_s \equiv 1$ everywhere.
The foliation of $\mc{N}^-(p)$ by the $\mc{S}^s_v$'s is called the \emph{geodesic foliation}.

\begin{remark}
Note that $s$ depends on the normalization $\mf{t}$.
\end{remark}

If the spacetime $M$ is foliated by a global time function $t$, then we have another natural foliating function $t_p$ for $\mc{N}^-(p)$, given by $t_p(q) = t(p) - t(q)$.
This is the foliating function used in the breakdown results of \cite{kl_rod:bdc, shao:bdc_nv}.

\subsection{Tensors}

Here, we list the notations we will use to describe various tensor fields on $\mc{N}^-(p)$.
Again, we assume an arbitrary foliating function $f$ for $\mc{N}^-(p)$.
We begin with the following objects:
\begin{itemize}
\item A tensor $w$ at some $q \in \mc{S}_v$ is said to be \emph{horizontal} iff $w$ is tangent to $\mc{S}_v$.

\item We denote by $\ul{T}^k \mc{N}^-(p)$ the \emph{horizontal bundle} over $\mc{N}^-(p)$ of all horizontal tensors of total rank $k$ at every $q \in \mc{N}^-(p)$.

\item Similarly, we denote by $\ol{T}^l \mc{N}^-(p)$ the \emph{extrinsic bundle} over $\mc{N}^-(p)$ of all tensors in $M$ of total rank $l$ at every $q \in \mc{N}^-(p)$.
\end{itemize}

For a vector bundle $\mc{V}$, we let $\Gamma \mc{V}$ denote the space of all smooth sections of $\mc{V}$.
By this formalism, then $\Gamma \ul{T}^k \mc{N}^-(p)$ and $\Gamma \ol{T}^l \mc{N}^-(p)$ denote the spaces of all \emph{horizontal tensor fields} of rank $k$ and \emph{extrinsic tensor fields} of rank $l$ on $\mc{N}^-(p)$, respectively.
In other words, a horizontal tensor field $A \in \Gamma \ul{T}^k \mc{N}^-(p)$ smoothly maps each $q \in \mc{N}^-(p)$ to a horizontal tensor of rank $k$ at $q$; extrinsic tensor fields can be characterized analogously.
For example, the restrictions to $\mc{N}^-(p)$ of the spacetime metric $g$ and the curvature $R$ can be treated as elements of $\Gamma \ol{T}^2 \mc{N}^-(p)$ and $\Gamma \ol{T}^4 \mc{N}^-(p)$, while the induced metrics $\lambda$ on the $\mc{S}_v$'s can be treated as an element of $\Gamma \ul{T}^2 \mc{N}^-(p)$.
Lastly, note that $\Gamma \ul{T}^0 \mc{N}^-(p) = \Gamma \ol{T}^0 \mc{N}^-(p) = C^\infty(\mc{N}^-(p))$.

Next, we define the \emph{mixed bundles} over $\mc{N}^-(p)$ to be the tensor product bundles
\[ \ul{T}^k \ol{T}^l \mc{N}^-\paren{p} = \ul{T}^k \mc{N}^-\paren{p} \otimes \ol{T}^l \mc{N}^-\paren{p} \text{.} \]
Similarly, we will call a section $A \in \Gamma \ul{T}^k \ol{T}^l \mc{N}^-(p)$ a \emph{mixed tensor field} on $\mc{N}^-(p)$.
By the duality formulation, we can consider such a field $A$ as a bilinear map
\[ A: \Gamma \ul{T}^k \mc{N}^-\paren{p} \times \Gamma \ol{T}^l \mc{N}^-\paren{p} \rightarrow C^\infty\paren{\mc{N}^-\paren{p}} \text{,} \]
or as the corresponding $C^\infty(\mc{N}^-(p))$-valued multilinear map on $k$ horizontal vector fields and $l$ extrinsic vector fields.

In index notation, we adopt the following conventions:
\begin{itemize}
\item Horizontal indices will be denoted using Latin letters and will take values between $1$ and $2$, inclusive.

\item Extrinsic indices will be denoted using Greek letters and will take values between $1$ and $4$, inclusive.

\item Collections of extrinsic indices will be denoted using capital Latin letters.
\end{itemize}

\subsection{Covariant Differentiation}

Recall that the Levi-Civita connection $D$ on $M$ induces a connection $\ol{D}$ on the extrinsic bundles $\ol{T}^l \mc{N}^-(p)$.
For any $A \in \Gamma \ol{T}^l \mc{N}^-(p)$ and a vector field $X$ on $\mc{N}^-(p)$, we can arbitrarily extend $A$ to a neighborhood of $\mc{N}^-(p)$ and define $\ol{D}_X A = D_X A$.
It is easy to see that this definition is independent of the chosen extension of $A$.
Moreover, it is clear from definition that $\ol{D}_X g \equiv 0$ for any vector field $X$ on $\mc{N}^-(p)$.
\footnote{Technically, by $\ol{D}_X g$, we mean $\ol{D}_X$ acting on the restriction of $g$ to $\mc{N}^-(p)$.}

The Levi-Civita connections $\nasla$ on the $\mc{S}_v$'s naturally aggregate to define a connection $\nasla$ on the horizontal bundles $\Gamma \ul{T}^k \mc{N}^-(p)$.
Let $X$ be a vector field on $\mc{N}^-(p)$:
\begin{itemize}
\item For a scalars $f \in C^\infty(\mc{N}^-(p))$, we define $\nasla_X f = Xf$, as usual.

\item For a horizontal vector field $Y$, we define $\nasla_X Y$ to be the projection onto the $\mc{S}_v$'s of $\ol{D}_X Y$.
Note that if $X$ is horizontal, then $\nasla_X Y$ is precisely the corresponding covariant derivative with respect to the $\mc{S}_v$'s.

\item For a fully covariant $A \in \Gamma \ul{T}^k \mc{N}^-(p)$, then $\nasla_X A \in \Gamma \ul{T}^k \mc{N}^-(p)$ is naturally given as follows: for horizontal vector fields $Y_1, \ldots, Y_k$,
\begin{align*}
\nasla_X A\paren{Y_1, \ldots, Y_k} &= X\brak{A\paren{Y_1, \ldots, Y_k}} - A\paren{\nasla_X Y_1, Y_2, \ldots, Y_k} \\
&\qquad - \ldots - A\paren{Y_1, \ldots, \nasla_X Y_k} \text{.}
\end{align*}
\end{itemize}
This definition of $\nasla$ generalizes the usual $\nasla$-covariant derivative on the $\mc{S}_v$'s to also include the $L$-direction.
Note also that $\nasla_X \lambda \equiv 0$ for any vector field $X$ on $\mc{N}^-(p)$, i.e., $\nasla$ remains compatible with the horizontal metrics.

We can canonically combine the connections $\nasla$ and $\ol{D}$ on the horizontal and extrinsic bundles to obtain generalized connections $\ol{\nasla}$ on the mixed bundles.
The basic idea is to have $\ol{\nasla}$ behave ``like $\nasla$ on the horizontal components" and ``like $\ol{D}$ on the extrinsic components".
More specifically, if $A \in \Gamma \ul{T}^k \ol{T}^l \mc{N}^-(p)$ is a fully covariant mixed tensor field, $X$ is a vector field on $\mc{N}^-(p)$, $Y_1, \ldots, Y_k \in \Gamma \ul{T}^1 \mc{N}^-(p)$ are horizontal vector fields, and $Z_1, \ldots, Z_l \in \Gamma \ol{T}^1 \mc{N}^-(p)$ are extrinsic vector fields, then we define $\ol{\nasla}_X A \in \Gamma \ul{T}^k \ol{T}^l \mc{N}^-(p)$, interpreted as a multilinear map, by assigning to the expression $\ol{\nasla}_X A(Y_1, \ldots, Y_k; Z_1, \ldots, Z_l)$ the following value:
\begin{align*}
&X\brak{A\paren{Y_1, \ldots, Y_k; Z_1, \ldots, Z_l}} - A\paren{\nasla_X Y_1, Y_2, \ldots, Y_k; Z_1, \ldots Z_l} - \ldots \\
&\qquad - A\paren{Y_1, \ldots, Y_{k-1}, \nasla_X Y_k; Z_1, \ldots, Z_l} - A\paren{Y_1, \ldots, Y_k; \ol{D}_X Z_1, Z_2, \ldots Z_l} \\
&\qquad - \ldots - A\paren{Y_1, \ldots, Y_k; Z_1, \ldots, Z_{l-1}, \ol{D}_X Z_l} \text{.}
\end{align*}

The main observations concerning the above construction are the following:
\begin{itemize}
\item The mixed covariant derivatives satisfy Leibniz rules similar to $\nasla$ and $\ol{D}$-covariant derivatives.

\item For any vector field $X$ on $\mc{N}^-(p)$, both $\ol{\nasla}_X \lambda \equiv 0$ and $\ol{\nasla}_X g \equiv 0$.
\end{itemize}
The primary consequence of these observations is that the same integrations by parts operations can be justified for $\ol{\nasla}$-derivatives of mixed tensor fields as for $\nasla$-derivatives on horizontal tensor fields.
This property was implicitly used in \cite{kl_rod:ksp, kl_rod:bdc}.

Finally, we will make use of the following notations:
\begin{itemize}
\item For any $A \in \Gamma \ul{T}^k \mc{N}^-(p)$, we let $\nasla A \in \Gamma \ul{T}^{k+1} \mc{N}^-(p)$ denote the horizontal tensor field mapping a horizontal vector field $X$ to $\nasla_X A$.
This is precisely the covariant differentials of $A$ on the $\mc{S}_v$'s.

\item Similarly, for any $A \in \Gamma \ul{T}^k \ol{T}^l \mc{N}^-(p)$, we let $\ol{\nasla} A \in \Gamma \ul{T}^{k+1} \ol{T}^l \mc{N}^-(p)$ denote the mixed tensor field mapping a horizontal vector field $X$ to $\ol{\nasla}_X A$.

\item We also define the horizontal and mixed Laplacians in the usual way:
\[ \lasl = \lambda^{ab} \nasla_{ab} \text{,} \qquad \ol{\lasl} = \lambda^{ab} \ol{\nasla}_{ab} \text{.} \]

\item Consider also the operators $\nasla_f = \vartheta \nasla_L$ and $\ol{\nasla}_f = \vartheta \ol{\nasla}_L$.
These are horizontal and mixed covariant derivatives in the tangent null direction, subject to the normalizations $\nasla_f f = \ol{\nasla}_f f \equiv 1$.
\end{itemize}
For further details involving the above constructions, see \cite[Sec. 1.2]{shao:bdc_nv}.

\subsection{Parametrizations}

We can parametrize $\mc{N}^-(p)$ by the foliating function $f$ and a value $\omega \in \Sph^2$.
For $0 < v < \mf{i}(p)$ and $\omega \in \Sph^2$, we can identify $(v, \omega)$ with the unique point $q$ on the corresponding null generator $\gamma_\omega$ with $f(q) = v$.
As a result, we can naturally treat any $\phi \in C^\infty(\mc{N}^-(p))$ as a function of $f$ and $\omega$.
For any such $\phi$, we denote by $\phi|_{(v, \omega)}$ the value of $\phi$ at the point $q$ corresponding to the parameters $(v, \omega)$.
We will freely use this $(v, \omega)$-notation throughout future sections without further elaboration.

\subsection{Null Frames}

In general, null frames are local frames $\hat{l}, \hat{m}, e_1, e_2$ which satisfy
\begin{align*}
g\paren{\hat{l}, \hat{l}} = g\paren{\hat{m}, \hat{m}} \equiv 0 \text{, } &\qquad g\paren{\hat{l}, \hat{m}} \equiv -2 \text{,} \\
g\paren{\hat{l}, e_a} = g\paren{\hat{m}, e_a} \equiv 0 \text{, } &\qquad g\paren{e_a, e_b} = \delta_{ab} \text{,}
\end{align*}
Here, we define null frames which are adapted to the $f$-foliation of $\mc{N}^-(p)$.

Each point of $\mc{S}_v$ is normal to exactly two null directions, one of which is represented by $L$.
We define $\ul{L}$, called the \emph{conjugate null vector field}, to be the vector field in the other normal null direction, subject to the normalization $g(L, \ul{L}) \equiv -2$.
Next, we append to $L$ and $\ul{L}$ a local orthonormal frame $e_1, e_2$ on the $\mc{S}_v$'s.
Then, $\{L, \ul{L}, e_1, e_2\}$ defines a natural null frame for $\mc{N}^-(p)$.

In this paper, we will index only with respect to adapted null frames.
\begin{itemize}
\item Horizontal indices $1, 2$ correspond to the directions $e_1$ and $e_2$.

\item $\ul{L}$ corresponds to the index $3$, while $L$ corresponds to the index $4$.
\end{itemize}

\subsection{Ricci Coefficients}

We will make use of the following connection quantities:
\begin{itemize}
\item Define the null second fundamental forms $\chi, \ul{\chi} \in \Gamma \ul{T}^2 \mc{N}^-(p)$ by
\[ \chi\paren{X, Y} = g\paren{\ol{D}_X L, Y} \text{,} \qquad \ul{\chi}\paren{X, Y} = g\paren{\ol{D}_X \ul{L}, Y} \]
for any horizontal vector fields $X$ and $Y$ on $\mc{N}^-(p)$.
Note that $\chi$ and $\ul{\chi}$ are symmetric, since both $L$ and $\ul{L}$ are normal to the $\mc{S}_v$'s.

\item We often decompose $\chi$ into its trace and traceless parts:
\[ \trace \chi = \lambda^{ab} \chi_{ab} \text{,} \qquad \hat{\chi} = \chi - \frac{1}{2} \paren{\trace \chi} \lambda \text{.} \]
We will also use an analogous decomposition for $\ul{\chi}$.

\item Define $\zeta, \ul{\eta} \in \Gamma \ul{T}^1 \mc{N}^-(p)$ by
\[ \zeta\paren{X} = \frac{1}{2} g\paren{\ol{D}_X L, \ul{L}} \text{,} \qquad \ul{\eta}\paren{X} = \frac{1}{2} g\paren{X, \ol{D}_L \ul{L}} \]
for any horizontal vector field $X$ on $\mc{N}^-(p)$.
\end{itemize}
The quantities $\trace \chi$, $\hat{\chi}$, and $\zeta$ are called the expansion, shear, and torsion of $\mc{N}^-(p)$ (with respect to the $f$-foliation), respectively.

In addition, we have the following relation between $\zeta$ and $\ul{\eta}$:
\begin{equation}\label{eq.null_torsion} \ul{\eta} = -\zeta + \nasla \paren{\log \vartheta} \text{.} \end{equation}
For a proof, see \cite[Prop. 2.7]{kl_rod:cg}.

Given a null frame $L, \ul{L}, e_1, e_2$, a vector field $Z \in \Gamma \ol{T}^1 \mc{N}^-(p)$ is decomposed
\[ Z = -\frac{1}{2} g\paren{Z, \ul{L}} L - \frac{1}{2} g\paren{Z, L} \ul{L} + \sum_{a = 1}^2 g\paren{Z, e_a} e_a \text{.} \]
Using this identity, we can decompose covariant derivatives along $\mc{N}^-(p)$:
\begin{align}
\label{eq.D_nf} \ol{D}_a e_b = \nasla_a e_b + \frac{1}{2} \ul{\chi}_{ab} L + \frac{1}{2} \chi_{ab} \ul{L} \text{,} &\qquad \ol{D}_a L = \chi_a{}^b e_b - \zeta_a L \text{,} \\
\notag \ol{D}_a \ul{L} = \ul{\chi}_a{}^b e_b + \zeta_a \ul{L} \text{,} &\qquad \ol{D}_L e_a = \nasla_L e_a + \ul{\eta}_a L \text{,} \\
\notag \ol{D}_L L \equiv 0 \text{,} &\qquad \ol{D}_L \ul{L} = 2 \ul{\eta}^a e_a \text{.}
\end{align}

Lastly, we define the \emph{mass aspect function} $\mu \in C^\infty(\mc{N}^-(p))$ by
\begin{equation}\label{eq.maf} \mu = \nasla^a \zeta_a - \frac{1}{2} \hat{\chi}^{ab} \hat{\ul{\chi}}_{ab} + \abs{\zeta}^2 + \frac{1}{4} R_{4343} - \frac{1}{2} R_{43} \text{,} \end{equation}
and we note the following transport equation satisfied by $\trace \ul{\chi}$:
\begin{equation}\label{eq.str_ev_chib_tr} \ol{\nasla}_L \paren{\trace \ul{\chi}} = 2 \nasla^a \ul{\eta}_a + 2 \abs{\ul{\eta}}^2 - \frac{1}{2} \paren{\trace \chi} \paren{\trace \ul{\chi}} - \hat{\chi}^{ab} \hat{\ul{\chi}}_{ab} + \frac{1}{2} R_{4343} - R_{43} \text{.} \end{equation}
For details on \eqref{eq.str_ev_chib_tr}, see \cite{chr_kl:stb_mink} or \cite[Sec. 3.2]{shao:bdc_nv}.

\subsection{Integration}

Since $\mc{N}^-(p)$ is null, we have no volume form on $\mc{N}^-(p)$ with respect to which we can integrate scalar functions.
However, we can still give a natural definition for integrals of functions over $\mc{N}^-(p)$, as long as we have a fixed normalization for $\mc{N}^-(p)$.
Indeed, we define this integral by
\begin{equation}\label{eq.pnc_int} \int_{\mc{N}^-(p)} \phi = \int_0^\infty \paren{\int_{\mc{S}^s_v} \phi} dv \end{equation}
for any $\phi \in C^\infty(\mc{N}^-(p))$ for which the right hand side is well-defined, where the $\mc{S}^s_v$'s are the level sets of $s$ on $\mc{N}^-(p)$.
We can similarly define integrals over any open subset of $\mc{N}^-(p)$, in particular the segments $\mc{N}^-(p; v_2)$ and $\mc{N}^-(p; v_1, v_2)$.

By a change of variables, we can restate \eqref{eq.pnc_int} in terms of any foliating function $f$:

\begin{proposition}\label{thm.pnc_int}
For any foliating function $f$ of $\mc{N}^-(p)$, we have
\[ \int_{\mc{N}^-\paren{p}} \phi = \int_0^\infty \paren{\int_{\mc{S}_v} \vartheta \cdot \phi} dv \]
for any integrable $\phi \in C^\infty(\mc{N}^-(p))$.
\end{proposition}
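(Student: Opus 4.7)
The strategy is a change of variables along each null generator: along $\gamma_\omega$, the affine parameter $s$ and the foliating value $v = f$ are related by a diffeomorphism with Jacobian $\vartheta$, and the induced area forms on $\mc{S}^s_v$ and $\mc{S}_v$ agree under the natural identification by the generators, so the factor $\vartheta$ in the claim absorbs exactly this Jacobian.

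First, I would parametrize $\mc{N}^-(p) \setminus \brac{p}$ as $(s, \omega) \mapsto \gamma_\omega(s)$ on $(0, \infty) \times \Sph^2$. Since $L f = \vartheta^{-1} > 0$ by \eqref{eq.null_lapse}, along each generator the equation $f(\gamma_\omega(s)) = v$ admits a unique smooth solution $s = s(v, \omega)$ with $\partial_v s = \vartheta|_{\gamma_\omega(s(v, \omega))}$, and the map $\omega \mapsto \gamma_\omega(s(v, \omega))$ is a diffeomorphism of $\Sph^2$ onto $\mc{S}_v$.

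Next, I would introduce pulled-back area densities on $\Sph^2$. Fixing local coordinates, let $\sigma(s, \omega)\, d\omega$ and $\tilde\sigma(v, \omega)\, d\omega$ denote the area forms of $\mc{S}^s_s$ and $\mc{S}_v$ pulled back by the respective parametrizations. The key identity is
\[ \tilde\sigma(v, \omega) = \sigma\paren{s(v, \omega), \omega} \text{.} \]
By the chain rule, at fixed $v$ the tangent vectors $\partial_{\omega^i}\brak{\gamma_\omega\paren{s(v, \omega)}}$ and $\partial_{\omega^i}\brak{\gamma_\omega(s)}\valat{s=s(v,\omega)}$ differ only by a scalar multiple of $L$. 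Since $\mc{N}^-(p)$ is null and $L$ spans its degenerate direction, $g(L, X) = 0$ for every $X \in T\mc{N}^-(p)$, so the two Gram matrices agree and the densities match.

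To conclude, by the definition \eqref{eq.pnc_int},
\[ \int_{\mc{N}^-\paren{p}} \phi = \int_0^\infty \int_{\Sph^2} \phi\paren{\gamma_\omega(s)} \sigma(s, \omega)\, d\omega\, ds \text{,} \]
while after applying the density identity and Fubini,
\[ \int_0^\infty \paren{\int_{\mc{S}_v} \vartheta \phi}\, dv = \int_{\Sph^2} \int_0^\infty \vartheta\,\phi\paren{\gamma_\omega(s(v,\omega))}\, \sigma\paren{s(v,\omega), \omega}\, dv\, d\omega \text{.} \]
A change of variables $s = s(v, \omega)$ in the inner integral gives $dv = \vartheta^{-1}\, ds$; the $\vartheta$ factor cancels, and we recover the first display. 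The only substantive step is the density identity, which rests solely on the fact that $L$ is $g$-orthogonal to every tangent vector of the null hypersurface $\mc{N}^-(p)$; everything else is bookkeeping.
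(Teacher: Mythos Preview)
Your argument is correct and is precisely the change of variables the paper invokes (the paper does not give a detailed proof here, only the phrase ``by a change of variables'' together with references). Your key observation---that the pulled-back area densities of $\mc{S}^s_{s(v,\omega)}$ and $\mc{S}_v$ agree because the parametrizing tangent vectors differ by a multiple of $L$, which is $g$-orthogonal to all of $T\mc{N}^-(p)$---is exactly the substantive point, and the remaining one-dimensional Jacobian $ds = \vartheta\,dv$ produces the stated factor.
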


We will also need the following derivative formula.
Here, the integrals over $\mc{S}_v$ are the usual integrals over Riemannian manifolds.

\begin{proposition}\label{thm.evolution_pnc_int}
Let $\phi \in C^\infty(\mc{N}^-(p))$.
Then, for $0 < v_0 < \mf{i}(p)$,
\[ \valat{\frac{d}{dv} \int_{\mc{S}_v} \phi}_{v = v_0} = \int_{\mc{S}_{v_0}} \brak{\nasla_f \phi + \vartheta \paren{\trace \chi} \phi} \text{.} \]
\end{proposition}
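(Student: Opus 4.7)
The plan is to reduce the statement to a first variation calculation by exploiting the parametrization $(v,\omega) \in (0,\mf{i}(p)) \times \Sph^2$ described in the preceding subsection on parametrizations, together with the transport relations \eqref{eq.D_nf} between $L$ and the horizontal directions.

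First, I would choose local angular coordinates $\omega^1, \omega^2$ on $\Sph^2$ and lift them to functions on $\mc{N}^-(p)$ by demanding $L \omega^a \equiv 0$; that is, $\omega^a$ is constant along each null generator. Together with $v = f$, this gives local coordinates $(v, \omega^1, \omega^2)$ on $\mc{N}^-(p)$. Since $Lf = 1/\vartheta$ and $L\omega^a = 0$, the coordinate vector field $\partial_v$ (at fixed $\omega$) is exactly $\vartheta L$, and the coordinate fields $\partial_a := \partial_{\omega^a}$ are horizontal (tangent to $\mc{S}_v$). Then I would write
\[ \int_{\mc{S}_v} \phi = \int_{\Sph^2} \phi \sqrt{\det \lambda_{ab}}\, d\omega^1 d\omega^2 , \qquad \lambda_{ab} = \lambda(\partial_a, \partial_b), \]
and differentiate under the integral sign at $v = v_0$.

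The $v$-derivative of $\phi$ is immediate from $\partial_v = \vartheta L$, which gives $\partial_v \phi = \vartheta L \phi = \nasla_f \phi$. For the density, I use the standard identity $\partial_v \sqrt{\det \lambda_{ab}} = \tfrac{1}{2} \lambda^{ab}(\partial_v \lambda_{ab}) \sqrt{\det \lambda_{ab}}$. Because $[\partial_v, \partial_a] = 0$, I have $\ol{D}_{\partial_v}\partial_a = \ol{D}_{\partial_a}(\vartheta L) = (\partial_a \vartheta) L + \vartheta \ol{D}_{\partial_a} L$. Applying the first formula in \eqref{eq.D_nf}, $\ol{D}_{\partial_a} L = \chi(\partial_a, \cdot)^\sharp - \zeta(\partial_a) L$, and pairing with the horizontal vector field $\partial_b$ kills the $L$-component since $g(L, \partial_b) = 0$. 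Symmetrizing in $a,b$ then gives
\[ \partial_v \lambda_{ab} = g(\ol{D}_{\partial_v} \partial_a, \partial_b) + g(\partial_a, \ol{D}_{\partial_v} \partial_b) = 2 \vartheta \chi_{ab} , \]
and tracing yields $\partial_v \sqrt{\det \lambda_{ab}} = \vartheta (\trace \chi) \sqrt{\det \lambda_{ab}}$.

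Combining these two contributions produces
\[ \valat{\frac{d}{dv} \int_{\mc{S}_v} \phi}_{v = v_0} = \int_{\Sph^2} \brak{\nasla_f \phi + \vartheta (\trace \chi) \phi} \sqrt{\det \lambda_{ab}}\, d\omega = \int_{\mc{S}_{v_0}} \brak{\nasla_f \phi + \vartheta (\trace \chi) \phi}, \]
which is the claimed identity. No single step is a serious obstacle; the only point requiring care is the identification $\partial_v = \vartheta L$ under the chosen parametrization, which is what allows the null-frame decomposition \eqref{eq.D_nf} to be applied cleanly to compute the evolution of the area form. The restriction $v_0 < \mf{i}(p)$ guarantees the parametrization is a diffeomorphism onto $\mc{S}_{v_0}$, so differentiation under the integral and the change of variables back to $\mc{S}_{v_0}$ are both legitimate.
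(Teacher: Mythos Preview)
Your argument is correct and is the standard first-variation computation one would expect here. Note, however, that the paper does not actually prove this proposition: it merely cites \cite[Sec.~3.4]{shao:bdc_nv} and \cite{kl_rod:cg} for the details. So there is no in-paper proof to compare against; your write-up supplies precisely the kind of computation those references contain, and each step (the identification $\partial_v = \vartheta L$, the vanishing of $g(L,\partial_b)$, and the evolution $\partial_v \lambda_{ab} = 2\vartheta\,\chi_{ab}$) is justified correctly from the definitions and from \eqref{eq.D_nf}.
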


For proofs of the above propositions, see \cite[Sec. 3.4]{shao:bdc_nv} and \cite{kl_rod:cg}.

\begin{remark}
We can justify the definition \eqref{eq.pnc_int} using energy estimate considerations.
Suppose $V$ is a region in $M$, with a portion of its boundary given by a neighborhood $V^\prime$ in $\mc{N}^-(p)$.
Then, if $X$ is a $1$-form on $M$, and we integrate $D^\alpha X_\alpha$ over $V$ and apply the divergence theorem, then the boundary integral corresponding to $V^\prime$ is precisely given by
\begin{equation}\label{eq.pnc_int_boundary} \int_{V^\prime} g\paren{X, L} \text{.} \end{equation}
This formula is fundamental to the local energy estimates applied in \cite{kl_rod:bdc, shao:bdc_nv}.
\end{remark}

\begin{remark}
Note that the definition \eqref{eq.pnc_int} depends on the normalization $\mf{t}$ chosen for $\mc{N}^-(p)$, while the argument in the previous remark establishes that the expression \eqref{eq.pnc_int_boundary} is in fact independent of $\mf{t}$.
\end{remark}

\subsection{Initial Values}

Another important set of properties we shall need in our analysis concerns the ``initial values" of many of the objects we have defined above, that is, we wish to compute their limits at $p$ along the null generators of $\mc{N}^-(p)$.
We list the general results for arbitrary foliations below.
Their proofs involve applications of convex geometry among other technicalities, hence we omit them in this paper.
For details, consult \cite[Sec. 3.3]{shao:bdc_nv}.
An earlier account for the special case of the geodesic foliation is presented in \cite{wang:cg}.

\begin{proposition}\label{thm.pnc_init}
The following limits hold for each $\omega \in \Sph^2$.
\begin{itemize}
\item We have the following comparisons between $f$ and the affine parameter $s$:
\begin{equation}\label{eq.pnc_init_fol_fct} \lim_{v \searrow 0} \valat{\frac{s}{f}}_{\paren{v, \omega}} = \vartheta_0 \text{,} \qquad \valat{\paren{\frac{1}{f} - \frac{\vartheta}{s}}}_{\paren{v, \omega}} = 0 \text{.} \end{equation}
Here, $\vartheta_0$ is the initial value of the null lapse $\vartheta$.

\item For each integer $k > 0$,
\begin{equation}\label{eq.pnc_init_ap} \lim_{v \searrow 0} \valat{v^{k-1} \abs{\nasla^k s}}_{\paren{v, \omega}} = 0 \text{,} \qquad \lim_{v \searrow 0} \valat{v^k \abs{\nasla^k \vartheta}}_{\paren{v, \omega}} = 0 \text{,} \end{equation}
where $\nasla^k$ denotes the $k$-th order horizontal covariant differential, i.e., the operator $\nasla$ applied successively $k$ times.
\footnote{Note that $\nasla$ here is defined with respect to the $f$-foliation.}

\item We have the following limits for $\chi$:
\begin{equation}\label{eq.pnc_init_chi} \lim_{v \searrow 0} \valat{\abs{\vartheta \paren{\trace \chi} - 2 f^{-1}}}_{\paren{v, \omega}} = 0 \text{,} \qquad \lim_{v \searrow 0} \valat{\abs{\hat{\chi}}}_{\paren{v, \omega}} = 0 \text{.} \end{equation}

\item We have the following limits for $\zeta$ and $\ul{\eta}$:
\begin{equation}\label{eq.pnc_init_zeta} \lim_{v \searrow 0} v \valat{\abs{\zeta}}_{\paren{v, \omega}} = \lim_{v \searrow 0} v \valat{\abs{\ul{\eta}}}_{\paren{v, \omega}} = 0 \text{.} \end{equation}

\item We also have the following limits for $\ul{\chi}$:
\begin{equation}\label{eq.pnc_init_chib} \lim_{v \searrow 0} v \valat{\paren{\trace \ul{\chi}}}_{\paren{v, \omega}} = -2 \vartheta_0^{-1} \text{,} \qquad \lim_{v \searrow 0} v \valat{\abs{\hat{\ul{\chi}}}}_{\paren{v, \omega}} = 0 \text{.} \end{equation}

\item Let $\phi \in C^\infty(\mc{N}^-(p))$, let $\phi_0 \in C^\infty(\Sph^2)$, and suppose
\[ \lim_{v \searrow 0} \valat{\phi}_{\paren{v, \omega^\prime}} = \valat{\phi_0}_{\paren{\omega^\prime}} \text{,} \qquad \omega^\prime \in \Sph^2 \text{.} \]
Then, the following integral limit holds:
\begin{equation}\label{eq.pnc_init_int} \lim_{v \searrow 0} v^{-2} \int_{\mc{S}_v} \phi = \vartheta_0^2 \int_{\Sph^2} \valat{\phi_0}_{\omega^\prime} d\omega^\prime \text{,} \end{equation}
where the integrals on the right hand sides are with respect to the standard Euclidean measure on $\Sph^2$.
\end{itemize}
\end{proposition}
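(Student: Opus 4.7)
The overall strategy is to first establish the initial values for the affine foliation $f = s$ (where $\vartheta \equiv 1$ and $\vartheta_0 = 1$) by reducing everything to a local computation in Riemann normal coordinates at $p$, and then transfer the conclusions to a general foliating function $f$ via the ODE $Lf = \vartheta^{-1}$ along null generators.

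For the affine case, let $\{x^\mu\}$ be Riemann normal coordinates about $p$ adapted to the orthonormal basis $\{e_\mu\}$, so that $g_{\mu\nu}(x) = \eta_{\mu\nu} + O(|x|^2)$ and the null exponential map sends $(v, \omega)$ to the point with coordinates $v(-1, \omega^k)$. A Taylor expansion of the pullback of $g$ yields
\[ \lambda^s_{ab}(v, \omega) = v^2 \mathring{\gamma}_{ab}(\omega) + O(v^4), \]
where $\mathring{\gamma}$ is the standard round metric on $\Sph^2$. Combining this with the identity $L \lambda_{ab} = 2\chi_{ab}$ gives $\trace \chi = 2/v + O(v)$ and $\hat{\chi} = O(v)$; an analogous expansion along the transversal null direction yields $\trace \ul{\chi} = -2/v + O(v)$ and $\hat{\ul{\chi}} = O(v)$. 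The estimate $\zeta = O(v)$ holds because, to leading order at $p$, the adapted null frame agrees with the Minkowski null frame, for which $\zeta$ vanishes identically.

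For a general foliating function $f$, integrating $Lf = \vartheta^{-1}$ along $\gamma_\omega$ from $p$ gives $f(\gamma_\omega(v)) = \int_0^v \vartheta^{-1} du$, whence $\vartheta \to \vartheta_0$ immediately yields $s/f \to \vartheta_0$; the sharper claim $1/f - \vartheta/s \to 0$ then uses the assumed finite limit of $L\vartheta$ via the fundamental theorem of calculus, giving (\ref{eq.pnc_init_fol_fct}). Since the vector field $L$, and hence the tensors $\chi$ and $\ul{\chi}$, are foliation-independent, while the $f$- and $s$-cross-sections through $\gamma_\omega(v)$ coincide to leading order in $v$, the Ricci-coefficient estimates transfer, with the $\vartheta_0$-dependent normalizations in (\ref{eq.pnc_init_chi}) and (\ref{eq.pnc_init_chib}) appearing via (\ref{eq.pnc_init_fol_fct}). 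The estimate for $\ul{\eta}$ then follows from the identity $\ul{\eta} = -\zeta + \nasla(\log \vartheta)$ together with derivative control on $\vartheta$. The derivative estimates (\ref{eq.pnc_init_ap}) follow from $C^\infty$-smoothness of $\ul{\exp}_p$ and $f$: in normal coordinates, $\nasla^k$ is comparable to $v^{-k}$ times the standard $\Sph^2$-derivative, and both $s$ and $\vartheta$ are spatially constant on $\mc{S}_v$ to leading order as $v \searrow 0$. Finally, (\ref{eq.pnc_init_int}) follows from Proposition \ref{thm.pnc_int} and the asymptotic area form $\sqrt{\det \lambda^s} = v^2 \sqrt{\det \mathring{\gamma}} + O(v^4)$ on $\mc{S}^s_v$, after changing to the $f$-foliation and accounting for the $\vartheta_0^2$ Jacobian.

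The main technical obstacle is the iterated derivative estimate (\ref{eq.pnc_init_ap}): one must either commute $\nasla$ repeatedly through transport equations along $L$ and carefully track the resulting curvature- and Ricci-coefficient error terms, or work directly in normal coordinates and compare the degenerating intrinsic connection $\nasla$ on $\mc{S}_v$ to the standard $\Sph^2$-connection with uniform control in $\omega$. A secondary bookkeeping subtlety is ensuring that the normalization constant $\vartheta_0$ threads correctly through the foliation change, so that all asymptotic factors in (\ref{eq.pnc_init_chi})--(\ref{eq.pnc_init_int}) align precisely.
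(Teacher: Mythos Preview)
The paper does not actually prove this proposition: immediately before stating it, the text says ``Their proofs involve applications of convex geometry among other technicalities, hence we omit them in this paper. For details, consult \cite[Sec.~3.3]{shao:bdc_nv}. An earlier account for the special case of the geodesic foliation is presented in \cite{wang:cg}.'' So there is no in-paper argument to compare against; the proposition is quoted from the references and used as a black box in the derivation of the main theorem.

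That said, your outline is the standard route taken in those references: establish everything first for the geodesic (affine) foliation via normal coordinates at $p$, then transfer to a general $f$ using the transport relation $ds/df = \vartheta$. Two small cautions on your sketch. First, the claim that $\ul{\chi}$ is foliation-independent is not correct as stated: $\ul{L}$ is determined by orthogonality to $\mc{S}_v$ together with the normalization $g(L,\ul{L}) = -2$, and the $\mc{S}_v$'s change with $f$, so $\ul{L}$ and hence $\ul{\chi}$ genuinely depend on the foliation. The transfer of \eqref{eq.pnc_init_chib} therefore requires comparing the $f$- and $s$-adapted $\ul{L}$ fields (they differ by a multiple of $L$ plus a horizontal vector, all vanishing at the vertex), not merely invoking independence. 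Second, your derivation of the second limit in \eqref{eq.pnc_init_fol_fct} needs care: a direct two-term Taylor expansion of $s(f)$ and $\vartheta(f)$ shows $1/f - \vartheta/s$ has a finite limit proportional to $\lim L\vartheta$, so one has to look more closely at how the statement is meant (or at the sharper expansions in the cited reference) rather than read it off from the ODE alone. Your identification of \eqref{eq.pnc_init_ap} as the main technical point is accurate and matches what the references handle in detail.
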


In the previous proposition, the tensor norms $|\cdot|$ denote the natural Riemannian norms on the $\mc{S}_v$'s, that is, for any $A \in \Gamma \ul{T}^k \mc{N}^-(p)$, we define
\[ \abs{A}^2 = \lambda\paren{A, A} = \paren{\prod_{i=1}^k \lambda^{a_i b_i}} A_{a_1 \ldots a_k} A_{b_1 \ldots b_k} \text{.} \]

\begin{remark}
We can also see from Proposition \ref{thm.pnc_init} that the behaviors of the objects listed within the proposition all ``tend to their corresponding values in Minkowski space" at $p$.
For example, the expansion $\trace \chi$ in Minkowski space is precisely $2 s^{-1}$, which is the asymptotic behavior near $p$ demonstrated in \eqref{eq.pnc_init_chi}.
\end{remark}

\begin{remark}
We can also derive initial limits for derivatives of the Ricci coefficients.
For details, see \cite[Sec. 3.3]{shao:bdc_nv}.
\end{remark}

\begin{remark}
The initial values for the Ricci coefficients behave even better than in Proposition \ref{thm.pnc_init} in the case of the geodesic foliation; see \cite[Sec. 3.3]{shao:bdc_nv} or \cite{wang:cg}.
\end{remark}

\section{The Main Theorem Statement}

Before we can state the representation theorem, we must make a few additional definitions.
Again, we suppose $\mc{N}^-(p)$ is normalized by $\mf{t} \in T_p M$ and foliated by $f \in C^\infty(\mc{N}^-(p))$.
Recall that from $\mf{t}$ and $f$, we can define the null vector fields $L$ and $\ul{L}$, as well as local null frames $L$, $\ul{L}$, $e_1$, $e_2$ adapted to the $f$-foliation of $\mc{N}^-(p)$.

For each $m \in \{1, \ldots, n\}$, we define the extrinsic tensor fields
\[ \sss{B}{m} \in \Gamma \ol{T}^{\sss{r}{m}} \mc{N}^-(p) \]
such that they satisfy the coupled system of transport equations
\begin{equation}\label{eq.transport_pre} \ol{\nasla}_f \sss{B}{m}^I = -\frac{1}{2} \brak{\vartheta \paren{\trace \chi} - \frac{2}{f}} \sss{B}{m}^I + \frac{\vartheta}{2} \sum_{c = 1}^n \sss{P}{cm}_{4J}{}^I \sss{B}{c}^J \text{,} \qquad 1 \leq m \leq n \end{equation}
along the null generators of $\mc{N}^-(p)$, where $I$ and $J$ here denote collections of $\sss{r}{m}$ and $\sss{r}{c}$ extrinsic indices.
We also stipulate the initial conditions
\begin{equation}\label{eq.transport_init} \valat{\sss{B}{m}}_p = \sss{J}{m} \text{,} \qquad 1 \leq m \leq n \text{,} \end{equation}
where each $\sss{J}{m}$ is an aribtrary tensor of rank $\sss{r}{m}$ at $p$.
Note that the validity of the system \eqref{eq.transport_pre}, \eqref{eq.transport_init} follows from the initial limit \eqref{eq.pnc_init_chi} for $\trace \chi$.

We also define the extrinsic fields
\[ \sss{A}{m} = f^{-1} \sss{B}{m} \in \Gamma \ol{T}^{\sss{r}{m}} \mc{N}^-\paren{p} \text{,} \]
and we note that the $\sss{A}{m}$'s satisfy the transport equations
\begin{equation}\label{eq.transport} \ol{\nasla}_L \sss{A}{m}^I = -\frac{1}{2} \paren{\trace \chi} \sss{A}{m}^I + \frac{1}{2} \sum_{c = 1}^n \sss{P}{cm}_{4J}{}^I \sss{A}{c}^J \text{,} \qquad 1 \leq m \leq n \text{.} \end{equation}

The $\sss{A}{m}$'s are the analogues of the tensor field $A$ in \cite{kl_rod:ksp} and correspond to the factor $r^{-1} = |y - x|^{-1}$ in the explicit expression for $\phi_1$ in the model problem.
Moreover, if $n = 1$ and the $\sss{P}{mc}$'s vanish, then \eqref{eq.transport} reduces to the transport equation given in \cite{kl_rod:ksp}.
Note in particular that even if a specific $\sss{J}{m}$ vanishes, $\sss{A}{m}$ can still be nontrivial due to the coupling in \eqref{eq.transport}.

Lastly, we define the coefficients
\[ \sss{\nu}{cm} \in \Gamma \ol{T}^{\sss{r}{c} + \sss{r}{m}} \mc{N}^-\paren{p} \text{,} \qquad 1 \leq m, c \leq n \]
by the formula
\begin{align}
\label{eq.mass_asp_P} \sss{\nu}{cm}_J{}^I &= - \ol{\nasla}^a \sss{P}{cm}_{aJ}{}^I + \frac{1}{2} \ol{\nasla}_4 \sss{P}{cm}_{3J}{}^I + \zeta^a \sss{P}{cm}_{aJ}{}^I + \frac{1}{4} \paren{\trace \ul{\chi}} \sss{P}{cm}_{4J}{}^I \\
\notag &\qquad + \frac{1}{4} \paren{\trace \chi} \sss{P}{cm}_{3J}{}^I + \frac{1}{2} \sum_{d = 1}^n \sss{P}{cd}_{4J}{}^K \sss{P}{dm}_{3K}{}^I \text{.}
\end{align}
These tensor fields will be present in the error terms of the representation formula.

\subsection{The Main Theorem}

We are now ready to state the main theorem.

\begin{theorem}\label{thm.ksp_gen}
Assume the following:
\begin{itemize}
\item Let $n$ be a positive integer, let $\sss{r}{1}, \ldots, \sss{r}{n}$ be nonnegative integers, and suppose for each $1 \leq m, c \leq n$, we have defined tensor fields $\sss{\Phi}{m}$, $\sss{\Psi}{m}$, and $\sss{P}{mc}$ on $M$ of ranks $\sss{r}{m}$, $\sss{r}{m}$, and $1 + \sss{r}{m} + \sss{r}{c}$, respectively.

\item Suppose the $\sss{\Phi}{m}$'s, $\sss{\Psi}{m}$'s, and $\sss{P}{mc}$'s satisfy the system \eqref{eq.tensor_wave_system}.

\item Fix $p \in M$, and suppose a regular portion $\mc{N}^-(p)$ of the past null cone $N^-(p)$ is normalized and foliated by $\mf{t} \in T_p M$ and $f \in C^\infty(\mc{N}^-(p))$.

\item Let $v_0$ be a constant satisfying $0 < v_0 \leq \mf{i}(p)$.

\item For each $1 \leq m \leq n$, we define the extrinsic tensor fields
\[ \sss{B}{m} \in \Gamma \ol{T}^{\sss{r}{m}} \mc{N}^-\paren{p} \text{,} \qquad \sss{A}{m} = f^{-1} \sss{B}{m} \in \Gamma \ol{T}^{\sss{r}{m}} \mc{N}^-\paren{p} \text{,} \]
along with a tensor $\sss{J}{m}$ of rank $\sss{r}{m}$ at $p$, such that the systems of transport equations \eqref{eq.transport_pre}, \eqref{eq.transport_init}, \eqref{eq.transport} hold.
\end{itemize}
Then, we have the representation formula
\begin{equation}\label{eq.ksp_gen} 4 \pi \vartheta_0 \sum_{m = 1}^n \sss{J}{m}^I \valat{\sss{\Phi}{m}_I}_p = \mf{F}\paren{p; v_0} + \mf{E}^1\paren{p; v_0} + \mf{E}^2\paren{p; v_0} + \mf{I}\paren{p; v_0} \text{,} \end{equation}
where
\begin{itemize}
\item The ``fundamental solution term" $\mf{F}(p; v_0)$ is given by
\begin{equation}\label{eq.ksp_gen_F} \mf{F}\paren{p; v_0} = - \sum_{m = 1}^n \int_{\mc{N}^-\paren{p; v_0}} \sss{A}{m}^I \sss{\Psi}{m}_I \text{.} \end{equation}

\item The ``principal error terms" $\mf{E}^1(p; v_0)$ are given by
\begin{align}
\label{eq.ksp_gen_E1_app} \mf{E}^1\paren{p; v_0} &= - \sum_{m = 1}^n \int_{\mc{N}^-\paren{p; v_0}} \ol{\nasla}^a \sss{A}{m}^I \ol{\nasla}_a \sss{\Phi}{m}_I \\
\notag &\qquad + \sum_{m = 1}^n \int_{\mc{N}^-\paren{p; v_0}} \paren{\zeta^a - \ul{\eta}^a} \ol{\nasla}_a \sss{A}{m}^I \sss{\Phi}{m}_I \text{.}
\end{align}

\item The remaining ``error terms" $\mf{E}^2(p; v_0)$ are given by
\begin{align}
\label{eq.ksp_gen_E2} \mf{E}^2\paren{p; v_0} &= \sum_{m = 1}^n \int_{\mc{N}^-\paren{p; v_0}} \mu \cdot \sss{A}{m}^I \sss{\Phi}{m}_I \\
\notag &\qquad + \frac{1}{2} \sum_{m = 1}^n \int_{\mc{N}^-\paren{p; v_0}} \sss{A}{m}^I R_{43}\brak{\sss{\Phi}{m}}_I \\
\notag &\qquad - \sum_{m, c = 1}^n \int_{\mc{N}^-\paren{p; v_0}} \sss{P}{cm}_{aJ}{}^I \ol{\nasla}^a \sss{A}{c}^I \sss{\Phi}{m}_I \\
\notag &\qquad + \sum_{m, c = 1}^n \int_{\mc{N}^-\paren{p; v_0}} \sss{\nu}{cm}_J{}^I \cdot \sss{A}{c}^J \sss{\Phi}{m}_I \text{,}
\end{align}
where $R_{43}[\sss{\Phi}{m}]_I$ denotes the curvature quantity
\[ R_{43} \brak{\sss{\Phi}{m}}_I = D_{43} \sss{\Phi}{m}_I - D_{34} \sss{\Phi}{m}_I \text{,} \]
and where ``$D_{43}$" and ``$D_{34}$" denote second covariant derivatives.

\item The ``initial value terms" $\mf{I}(p; v_0)$ are given by
\begin{align}
\label{eq.ksp_gen_I} \mf{I}\paren{p; v_0} &= -\frac{1}{2} \sum_{m = 1}^n \int_{\mc{S}_{v_0}} \paren{\trace \ul{\chi}} \sss{A}{m}^I \sss{\Phi}{m}_I - \sum_{m = 1}^n \int_{\mc{S}_{v_0}} \sss{A}{m}^I D_3 \sss{\Phi}{m}_I \\
\notag &\qquad - \frac{1}{2} \sum_{m, c = 1}^n \int_{\mc{S}_{v_0}} \sss{P}{cm}_{3J}{}^I \sss{A}{c}^J \sss{\Phi}{m}_I \text{.}
\end{align}
\end{itemize}
Here, we have indexed with respect to arbitrary null frames $L, \ul{L}, e_1, e_2$ adapted to the $f$-foliation.
The capital letters $I, J$ refer to collections of extrinsic indices.

Furthermore, the error terms $\mf{E}^1(p; v_0)$ can be alternately expressed as
\begin{equation}\label{eq.ksp_gen_E1_ksp} \mf{E}^1\paren{p; v_0} = \sum_{m = 1}^n \int_{\mc{N}^-\paren{p; v_0}} \ol{\lasl} \sss{A}{m}^I \sss{\Phi}{m}_I + 2 \sum_{m = 1}^n \int_{\mc{N}^-\paren{p; v_0}} \zeta^a \cdot \ol{\nasla}_a \sss{A}{m}^I \sss{\Phi}{m}_I \text{.} \end{equation}
\end{theorem}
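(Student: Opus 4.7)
The plan is to apply integration by parts to the identity
\begin{equation*}
\sum_{m=1}^n \int_{\mc{N}^-\paren{p;\epsilon, v_0}} \sss{A}{m}^I \sss{\mc{L}}{m}\sss{\Phi}{m}_I = \sum_{m=1}^n \int_{\mc{N}^-\paren{p;\epsilon, v_0}} \sss{A}{m}^I \sss{\Psi}{m}_I,
\end{equation*}
which holds on the truncated segments $\mc{N}^-(p;\epsilon, v_0)$ for small $\epsilon > 0$, and then pass to the limit $\epsilon \searrow 0$. Expanding $\Box_g \sss{\Phi}{m}_I$ in the adapted null frame using \eqref{eq.D_nf}, one obtains
\begin{equation*}
\Box_g \sss{\Phi}{m}_I = \ol{\lasl}\sss{\Phi}{m}_I - \ol{\nasla}_L \ol{\nasla}_{\ul{L}}\sss{\Phi}{m}_I - \tfrac{1}{2}\paren{\trace\chi}\ol{\nasla}_{\ul{L}}\sss{\Phi}{m}_I - \tfrac{1}{2}\paren{\trace\ul{\chi}}\ol{\nasla}_L\sss{\Phi}{m}_I + 2\ul{\eta}^a \ol{\nasla}_a\sss{\Phi}{m}_I + \tfrac{1}{2}R_{43}\brak{\sss{\Phi}{m}}_I,
\end{equation*}
where the $R_{43}$ term arises from the antisymmetric part of the second covariant derivative. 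The first-order term $\sss{P}{mc}_{\mu I}{}^J D^\mu\sss{\Phi}{c}_J$ splits via $D^L = -\tfrac{1}{2}\ol{\nasla}_{\ul{L}}$, $D^{\ul{L}} = -\tfrac{1}{2}\ol{\nasla}_L$, and $D^a = \ol{\nasla}^a$ into transverse, tangential-null, and horizontal components.

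The critical step eliminates all transverse derivatives $\ol{\nasla}_{\ul L}\sss{\Phi}{m}$ from the bulk. For the dominant offender $-\sss{A}{m}^I\ol{\nasla}_L\ol{\nasla}_{\ul L}\sss{\Phi}{m}_I$, integrating by parts in $L$ via Proposition \ref{thm.evolution_pnc_int} produces boundary contributions at $\mc{S}_\epsilon, \mc{S}_{v_0}$ plus a bulk term $\ol{\nasla}_L\sss{A}{m}^I\cdot\ol{\nasla}_{\ul L}\sss{\Phi}{m}_I$. Substituting the transport equation \eqref{eq.transport}, the $-\tfrac{1}{2}\trace\chi\,\sss{A}{m}$ piece of $\ol{\nasla}_L\sss{A}{m}$ exactly cancels the $-\tfrac{1}{2}\trace\chi\,\sss{A}{m}^I\ol{\nasla}_{\ul L}\sss{\Phi}{m}_I$ from the $\Box_g$ expansion, while the $\tfrac{1}{2}\sss{P}{cm}_{4J}{}^I\sss{A}{c}^J$ piece, after relabeling summation indices $m \leftrightarrow c$, exactly cancels the transverse $L$-component $-\tfrac{1}{2}\sss{A}{m}^I\sss{P}{mc}_{4I}{}^J\ol{\nasla}_{\ul L}\sss{\Phi}{c}_J$ of the first-order term. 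This cancellation is precisely the design principle of \eqref{eq.transport}. The surviving $\mc{S}_{v_0}$ boundary contribution $-\int_{\mc{S}_{v_0}}\sss{A}{m}^I D_3\sss{\Phi}{m}_I$ is exactly the second term of $\mf{I}$, while the corresponding $\mc{S}_\epsilon$ piece is $O(\epsilon) \to 0$ by Proposition \ref{thm.pnc_init}.

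With the transverse derivatives eliminated, the surviving tangential terms are handled by further integrations by parts. Horizontal IBP on each closed leaf $\mc{S}_v$ converts $\int\sss{A}{m}^I\ol{\lasl}\sss{\Phi}{m}_I$ into $-\int\ol{\nasla}^a\sss{A}{m}^I\ol{\nasla}_a\sss{\Phi}{m}_I$, the first piece of $\mf{E}^1$. Horizontal IBP on the $2\ul{\eta}^a\ol{\nasla}_a\sss{\Phi}{m}_I$ term from $\Box_g$ and on the horizontal component $\sss{P}{mc}_{aI}{}^J\ol{\nasla}^a\sss{\Phi}{c}_J$ of the first-order term, combined with \eqref{eq.null_torsion}, produces the $(\zeta^a - \ul{\eta}^a)\ol{\nasla}_a\sss{A}{m}^I\sss{\Phi}{m}_I$ piece of $\mf{E}^1$, the $-\sss{P}{cm}_{aJ}{}^I\ol{\nasla}^a\sss{A}{c}^J\sss{\Phi}{m}_I$ piece of $\mf{E}^2$, and divergences $\nasla^a\ul{\eta}_a$ and $-\ol{\nasla}^a\sss{P}{cm}_{aJ}{}^I$ that will feed into $\mu$ and $\sss{\nu}{cm}$. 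The remaining tangential null-derivative terms $-\tfrac{1}{2}\trace\ul{\chi}\,\sss{A}{m}^I\ol{\nasla}_L\sss{\Phi}{m}_I$ and $-\tfrac{1}{2}\sss{P}{mc}_{3I}{}^J\sss{A}{m}^I\ol{\nasla}_L\sss{\Phi}{c}_J$ are moved via a second $L$-IBP: the outer boundary contributions complete the remaining pieces of $\mf{I}$, while the bulk residuals, combined with the transport equation \eqref{eq.transport}, the $\trace\ul{\chi}$ evolution equation \eqref{eq.str_ev_chib_tr}, and the definitions \eqref{eq.maf} of $\mu$ and \eqref{eq.mass_asp_P} of $\sss{\nu}{cm}$, assemble precisely into the $\mu$ and $\sss{\nu}{cm}$ pieces of $\mf{E}^2$; the $R_{43}$ piece of $\mf{E}^2$ passes through directly from the $\Box_g$ expansion.

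The limit $\epsilon \searrow 0$ then extracts the left-hand side of \eqref{eq.ksp_gen}. The dominant inner boundary contribution is generated by the second $L$-IBP as $\tfrac{1}{2}\int_{\mc{S}_\epsilon}(\trace\ul{\chi})\sss{A}{m}^I\sss{\Phi}{m}_I$: combining $\trace\ul{\chi} \sim -2/(\vartheta_0 v)$ from \eqref{eq.pnc_init_chib}, $\sss{A}{m}^I = f^{-1}\sss{B}{m}^I \sim \sss{J}{m}^I/v$, and the spherical limit $\int_{\mc{S}_v}\phi \sim 4\pi\vartheta_0^2 v^2 \phi|_p$ from \eqref{eq.pnc_init_int}, this term converges to $-4\pi\vartheta_0\sum_m\sss{J}{m}^I\sss{\Phi}{m}_I|_p$, which after transposition yields the claimed $4\pi\vartheta_0$ prefactor on the left-hand side. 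All other inner boundary contributions vanish at rate $O(\epsilon)$ by the same estimates. The equivalent expression \eqref{eq.ksp_gen_E1_ksp} for $\mf{E}^1$ follows from one more horizontal IBP together with \eqref{eq.null_torsion}. The principal obstacle is the combinatorial bookkeeping: across two rounds of integration by parts involving over a dozen interrelated terms coupling $\Box_g$, the first-order operators $\sss{P}{mc}$, and the transport equation for $\sss{A}{m}$, one must verify that every spurious contribution cancels exactly and that every surviving coefficient matches $\mu$, $\sss{\nu}{cm}$, and the other quantities specified in $\mf{E}^1$, $\mf{E}^2$, and $\mf{I}$.
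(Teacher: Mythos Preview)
Your proposal is correct and follows essentially the same approach as the paper: expand $\Box_g$ on $\mc{N}^-(p;\epsilon,v_0)$ in the adapted null frame, integrate by parts in the $L$ and horizontal directions, invoke the transport equation \eqref{eq.transport} to annihilate the transverse $D_3\sss{\Phi}{m}$ bulk terms, use \eqref{eq.str_ev_chib_tr} and \eqref{eq.null_torsion} to assemble the $\mu$ and $\sss{\nu}{cm}$ error pieces, and extract the vertex contribution from the $\trace\ul{\chi}$ boundary term via \eqref{eq.pnc_init_chib} and \eqref{eq.pnc_init_int}. One notational caveat: you write $\ol{\nasla}_{\ul L}\sss{\Phi}{m}$, but in the paper's framework $\ol{\nasla}$ is only defined for directions tangent to $\mc{N}^-(p)$; the transverse derivative must be written as $D_3\sss{\Phi}{m}$ (treated as an extrinsic field on the cone, to which $\ol{\nasla}_4$ can then be applied), and the horizontal integration by parts carries an extra $\nasla^a(\log\vartheta)$ factor from the weight in Proposition~\ref{thm.pnc_int}, which is precisely what combines with $2\ul{\eta}^a$ via \eqref{eq.null_torsion} to produce the $(\ul{\eta}^a-\zeta^a)$ coefficient you state.
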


Note that $\mf{F}(p; v_0)$ corresponds to the explicit form of $\phi_1$ in the model problem, while $\mf{E}^1(p; v_0)$ and $\mf{E}^2(p; v_0)$ are error terms which vanish in the model problem.
Lastly, $\mf{I}(p; v_0)$ describes the initial value contributions, which correspond to the Kirchhoff formula for $\phi_2$ in the model problem.

\begin{remark}
Although the representation formula was stated in \eqref{eq.ksp_gen}-\eqref{eq.ksp_gen_E1_ksp} in index notation, this was done only as a matter of convenience.
It is easy to see that these expressions can in fact be described invariantly.
\end{remark}

\begin{remark}
In particular, we can use \eqref{eq.ksp_gen} to examine the value of any $\sss{\Phi}{m}|_p$ individually by setting $\sss{J}{c} = 0$ for all $c \neq m$.
\end{remark}

\subsection{Improvements over {\bf KR}}

Theorem \ref{thm.ksp_gen} contains a number of improvements over the parametrix {\bf KR}.
The most apparent is that Theorem \ref{thm.ksp_gen} handles the system \eqref{eq.tensor_wave_system} and not just the single equation \eqref{eq.tensor_wave}.
While \cite{kl_rod:ksp} only presented the case with vanishing initial data, here we present the general case for arbitrary initial data; the dependence on the initial data is given explicitly by the $\mf{I}(p; v_0)$-term \eqref{eq.ksp_gen_I}.

\begin{remark}
In fact, the extended parametrix presented in \cite[Thm. 4.1]{kl_rod:ksp} can handle the ``multiple wave equations" aspect of \eqref{eq.tensor_wave_system}, but it does not cover the issues of general first-order terms.
For details, see section 5.
\end{remark}

Additionally, Theorem \ref{thm.ksp_gen} weakens the assumptions required for the representation formula to be valid.
The main assumptions needed for {\bf KR} are the local spacetime conditions {\bf A1} and {\bf A2} given in \cite[Sec. 2.1]{kl_rod:ksp}.
First, since we make no reference to the optical function and remain entirely on $\mc{N}^-(p)$ in our derivation, we only need that the null cone regularity stated in {\bf A2} hold for $\mc{N}^-(p)$, rather than for $\mc{N}^-(q)$ for every $q$ in a neighborhood of $p$, as needed in \cite{kl_rod:ksp}.
This assumption is reflected in Theorem \ref{thm.ksp_gen} by working only within the segment $\mc{N}^-(p; v_0)$, where $v_0 \leq \mf{i}(p)$.
Moreover, since we work exclusively on $\mc{N}^-(p)$, the local hyperbolicity condition {\bf A1} is entirely superfluous here.

\subsection{Exclusive Dependence on the Null Cone}

We also previously noted that the representation formula depends only on quantities defined on $\mc{N}^-(p)$.
Indeed, by inspecting the individual portions \eqref{eq.ksp_gen_F}-\eqref{eq.ksp_gen_E2}, we can immediately see that all the objects within the integrands can be expressed as horizontal, extrinsic, or mixed tensor fields on $\mc{N}^-(p)$.
The representation formula remains unaffected by potential extensions of these quantities off $\mc{N}^-(p)$.
\footnote{The only exceptions to these rules are the factors $D_3 \sss{\Phi}{m}$ in the ``initial data" terms \eqref{eq.ksp_gen_I}, which depend on transversal derivatives of the $\sss{\Phi}{m}$'s.}

We remark that this property of the parametrix does not conflict with the fact that wave equations on curved spacetimes do not satisfy the strong Huygens principle.
This is demonstrated in Theorem \ref{thm.ksp_gen} by the recursive error terms \eqref{eq.ksp_gen_E1_app}, \eqref{eq.ksp_gen_E2}, \eqref{eq.ksp_gen_E1_ksp}, which contain the unknowns $\sss{\Phi}{1}, \ldots, \sss{\Phi}{n}$ themselves.

\subsection{Extensions Beyond Cut Locus Terminal Points}

As stated in Theorem \ref{thm.ksp_gen}, the parametrix is valid only when the past null exponential $\ul{\exp}_p$ remains a global diffeomorphism.
We can, however, trivially extend Theorem \ref{thm.ksp_gen} beyond cut locus terminal points, that is, terminal points resulting from the intersection of two distinct null generators, by lifting the representation formula to the tangent space $T_p M$ via $\ul{\exp}_p$.
By pulling back over $\ul{\exp}_p$, we can consider all the tensorial objects referenced in Theorem \ref{thm.ksp_gen} as fields on the past null cone $\mf{N}$ of $T_p M$.
At the level of the tangent space, cut locus terminal points no longer exist, so we can apply Theorem \ref{thm.ksp_gen} as usual to obtain a representation formula on $\mf{N}$.

Using $\ul{\exp}_p$, we can express the above representation formula on $\mf{N}$ back in terms of the past null cone of $p$, beyond any cut locus terminal points.
Due to intersecting null geodesics, some objects in the integrands, such as the Ricci coefficients, will be multi-valued at cut locus points.
However, in this extended formulation of Theorem \ref{thm.ksp_gen}, we longer have the standard local energy estimates which were crucial to the general relativity applications \cite{kl_rod:bdc, shao:bdc_nv}.

On the other hand, conjugate terminal points on past null cones pose a much more serious problem.
Such points by definition are preserved by the lifting via $\ul{\exp}_p$ to the tangent space $T_p M$.
Moreover, conjugate points are accompanied by the degeneration of the expansion $\trace \chi$ and hence the $\sss{A}{m}$'s.
Consequently, we have no natural extension of Theorem \ref{thm.ksp_gen} which survives beyond conjugate points.

\subsection{Applications to General Relativity}

Here, we briefly describe how Theorem \ref{thm.ksp_gen} is applied to the breakdown problem for the Einstein-Maxwell equations, i.e., to Theorem \ref{thm.bdc_maxwell}.
First, in the vacuum analogue, one crucial estimate is the uniform bound for the spacetime Riemann curvature $R$, which satisfies a wave equation
\begin{equation}\label{eq.wave_curv_vacuum} \Box_g R \cong R \cdot R \text{.} \end{equation}
The parametrix {\bf KR} was applied to \eqref{eq.wave_curv_vacuum} in order to control $R$ at a point $p$; the principal term to be estimated as a result is an integral over $\mc{N}^-(p)$ of the quadratic nonlinearity $R \cdot R$.
\footnote{The analogue $A$ of the transport equation in {\bf KR} also appears here.}
This was handled using a similar strategy as in the classical work \cite{ea_mo:g_ymh} of Eardley and Moncrief, in that at least one of the $R$'s in the integrand could be controlled by the flux density of $R$ on $\mc{N}^-(p)$.

The Einstein-Maxwell case is similar, except that we now have a system of two wave equations for both $R$ and the Maxwell $2$-form $F$ on $M$:
\begin{align}
\label{eq.wave_curv_em} \Box_g R &\cong F \cdot D^2 F + \paren{R + D F} \cdot \paren{R + D F} + \text{l.o.} \text{,} \\
\notag \Box_g D F &\cong F \cdot D R + \paren{R + D F} \cdot \paren{R + D F} + \text{l.o.} \text{.}
\end{align}
We can apply Theorem \ref{thm.ksp_gen} to this system, and we can handle the quadratic nonlinearities in the same manner as in the vacuum analogue.
The first-order terms $F \cdot D^2 F$ and $F \cdot D R$ present a new challenge, however, as the methods in \cite{kl_rod:bdc} using {\bf KR} fail here.
However, these terms can be handled directly using Theorem \ref{thm.ksp_gen}, which absorbs the effect of these terms into the transport equation.

More specifically, in the setting of Theorem \ref{thm.ksp_gen}, we take $\sss{\Phi}{1} = R$ and $\sss{\Phi}{2} = D F$.
From \eqref{eq.wave_curv_em}, we see that the first-order coefficients $\sss{P}{11}$ and $\sss{P}{22}$ vanish, while the coefficients $\sss{P}{12}$ and $\sss{P}{21}$ are schematically of the form $F$.
Due to the extra control present for $F$ stipulated in Theorem \ref{thm.bdc_maxwell}, the $\sss{A}{m}$'s, which depend on the $\sss{P}{cm}$'s, can still be adequately controlled.
For details, see \cite{shao:bdc_nv}.

\section{Proof of the Main Theorem}

In this section, we assume the hypotheses of Theorem \ref{thm.ksp_gen}, and we derive in detail the parametrix of Theorem \ref{thm.ksp_gen}.
Let $0 < \epsilon < v_0$, and define for convenience
\[ \mc{N}_\epsilon = \mc{N}^-\paren{p; \epsilon, v_0} = \brac{q \in \mc{N}^-\paren{p} \mid \epsilon < f\paren{q} < v_0} \text{.} \]

\subsection{General Integration Formulas}

The most fundamental steps in the derivation of Theorem \ref{thm.ksp_gen} involve integrations by parts.
Here, we state the general lemmas representing these steps.
These lemmas deal with two separate cases: integrations by parts involving $\ol{\nasla}$, and integrations by parts involving $\ol{\nasla}_L$.

We first examine the horizontal case.
This is a simple consequence of the compatibility of the mixed covariant differential $\ol{\nasla}$ with both the spacetime metric $g$ and the horizontal metrics $\lambda$.

\begin{lemma}\label{thm.int_parts_pnc_hor}
For any integer $r \geq 0$, $S \in \Gamma \ol{T}^r \mc{N}^-(p)$, and $T \in \Gamma \ul{T}^1 \ol{T}^r \mc{N}^-(p)$,
\[ \int_{\mc{N}_\epsilon} S^I \cdot \ol{\nasla}^a T_{aI} = -\int_{\mc{N}_\epsilon} \ol{\nasla}^a S^I \cdot T_{aI} - \int_{\mc{N}_\epsilon} \nasla^a \paren{\log \vartheta} \cdot S^I T_{aI} \text{,} \]
where $I$ denotes a collection of $r$ arbitrary extrinsic indices.
\end{lemma}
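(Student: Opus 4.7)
The plan is to reduce the identity to the divergence theorem on the closed spacelike leaves $\mc{S}_v$, together with the integration formula of Proposition \ref{thm.pnc_int}. The key algebraic step is the Leibniz rule: since $\ol{\nasla}$ is compatible with both $g$ and $\lambda$, contraction commutes with $\ol{\nasla}$, so
\begin{equation*}
S^I \cdot \ol{\nasla}^a T_{aI} = \ol{\nasla}^a\bigl(S^I T_{aI}\bigr) - \ol{\nasla}^a S^I \cdot T_{aI}.
\end{equation*}
Set $X_a = S^I T_{aI}$; after contracting over the extrinsic indices $I$ this is a purely horizontal one-form on $\mc{N}^-(p)$. Since $\ol{\nasla}$ acts as $\nasla$ on horizontal tensors with no extrinsic slots, we have $\ol{\nasla}^a X_a = \lambda^{ab}\nasla_b X_a = \nasla^a X_a$. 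Thus it suffices to establish
\begin{equation*}
\int_{\mc{N}_\epsilon} \nasla^a X_a = -\int_{\mc{N}_\epsilon} \nasla^a(\log \vartheta) \cdot X_a,
\end{equation*}
since combining this with the Leibniz identity above yields the claimed formula.

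To prove the displayed identity I would apply Proposition \ref{thm.pnc_int} to rewrite
\begin{equation*}
\int_{\mc{N}_\epsilon} \nasla^a X_a = \int_\epsilon^{v_0} \paren{\int_{\mc{S}_v} \vartheta \cdot \nasla^a X_a} dv.
\end{equation*}
On each leaf $\mc{S}_v$, which is diffeomorphic to $\Sph^2$ and hence closed, I expand by the Leibniz rule
\begin{equation*}
\vartheta \cdot \nasla^a X_a = \nasla^a\paren{\vartheta X_a} - \nasla^a \vartheta \cdot X_a;
\end{equation*}
the first term integrates to zero over $\mc{S}_v$ by the divergence theorem. Consequently the inner integral equals $-\int_{\mc{S}_v} \nasla^a \vartheta \cdot X_a = -\int_{\mc{S}_v} \vartheta \cdot \nasla^a(\log \vartheta) \cdot X_a$, and re-invoking Proposition \ref{thm.pnc_int} produces the desired identity.

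The main obstacle is conceptual rather than computational: one must verify that the horizontal divergence $\ol{\nasla}^a(S^I T_{aI})$ legitimately collapses to the intrinsic divergence $\nasla^a X_a$ on each leaf. This uses compatibility of $\ol{\nasla}$ with $g$ in the extrinsic indices (so summing over the repeated $I$ passes through the derivative) and with $\lambda$ in the horizontal indices (so raising with $\lambda^{ab}$ commutes with $\ol{\nasla}$), both of which are recorded in the construction of $\ol{\nasla}$ above. Once these compatibilities are invoked, no boundary terms appear from the $\mc{S}_v$-integrations because the leaves are closed, and the only ``error'' relative to the naive integration by parts is the extra $\nasla \log \vartheta$ factor produced by the $\vartheta$ in the measure of Proposition \ref{thm.pnc_int}.
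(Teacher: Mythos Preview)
Your proof is correct and follows essentially the same route as the paper's: both apply Proposition \ref{thm.pnc_int} to introduce the $\vartheta$-weight, use the Leibniz rule (relying on compatibility of $\ol{\nasla}$ with $g$ and $\lambda$), and then kill the total-divergence term on each closed leaf $\mc{S}_v$ via the divergence theorem. The only cosmetic difference is that you apply Leibniz in two stages (first isolating $X_a = S^I T_{aI}$, then splitting off $\vartheta$) whereas the paper does it in a single step.
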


\begin{proof}
Applying Proposition \ref{thm.pnc_int} yields
\begin{align}
\label{eql.int_parts_pnc_hor_1} \int_{\mc{N}_\epsilon} S^I \cdot \ol{\nasla}^a T_{aI} &= \int_\epsilon^{v_0} \paren{\int_{\mc{S}_v} \vartheta \cdot S^I \ol{\nasla}^a T_{aI}} dv \\
\notag &= \int_\epsilon^{v_0} \brak{\int_{\mc{S}_v} \ol{\nasla}^a \paren{\vartheta \cdot S^I T_{aI}}} dv - \int_\epsilon^{v_0} \paren{\int_{\mc{S}_v} \vartheta \ol{\nasla}^a S^I \cdot T_{aI}} dv \\
\notag &\qquad - \int_\epsilon^{v_0} \paren{\int_{\mc{S}_v} \vartheta \nasla^a \paren{\log \vartheta} \cdot S^I T_{aI}} dv \text{.}
\end{align}
In the last step, we used that both $\ol{\nasla} g$ and $\ol{\nasla} \gamma$ vanish.
By Proposition \ref{thm.pnc_int} again, the last two terms on the right-hand side of \eqref{eql.int_parts_pnc_hor_1} are precisely the desired terms.
Defining the horizontal $1$-form $\omega$ by $\omega(X) = \vartheta \cdot S^I T_{aI} X^a$, then by the divergence theorem, the first term on the right hand side of \eqref{eql.int_parts_pnc_hor_1} becomes
\[ \int_\epsilon^{v_0} \paren{\int_{\mc{S}_v} \nasla^a \omega_a} dv = 0 \text{.} \qedhere \]
\end{proof}

As a special case of Lemma \ref{thm.int_parts_pnc_hor}, we have the following:

\begin{corollary}\label{thm.int_parts_pnc_lasl}
For any integer $r \geq 0$ and $S, T \in \Gamma \ol{\mc{T}}^r \mc{N}^-(p)$,
\[ \int_{\mc{N}_\epsilon} S^I \cdot \ol{\lasl} T_I = -\int_{\mc{N}_\epsilon} \ol{\nasla}^a S^I \cdot \ol{\nasla}_a T_I - \int_{\mc{N}_\epsilon} \nasla^a \paren{\log \vartheta} \cdot S^I \ol{\nasla}_a T_I \text{,} \]
where $I$ denotes a collection of $r$ arbitrary extrinsic indices.
\end{corollary}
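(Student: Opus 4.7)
The plan is to recognize this corollary as a direct application of Lemma \ref{thm.int_parts_pnc_hor}, applied to a particular mixed tensor field obtained by differentiating $T$ once. Since the horizontal and mixed Laplacians are defined in the previous section by $\ol{\lasl} = \lambda^{ab} \ol{\nasla}_{ab}$, we have the identity $\ol{\lasl} T_I = \ol{\nasla}^a \ol{\nasla}_a T_I$, which rewrites the left-hand side of the corollary into the exact form appearing in Lemma \ref{thm.int_parts_pnc_hor}.

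More precisely, I would introduce the mixed tensor field $\tilde{T} \in \Gamma \ul{T}^1 \ol{T}^r \mc{N}^-(p)$ defined by $\tilde{T}_{aI} = \ol{\nasla}_a T_I$. The fact that $\tilde{T}$ is a legitimate section of the mixed bundle $\ul{T}^1 \ol{T}^r \mc{N}^-(p)$ is immediate from the definition of $\ol{\nasla}$, which produces a horizontal index on the new slot while preserving the extrinsic character of the remaining indices $I$. With this substitution, the statement
\[ \int_{\mc{N}_\epsilon} S^I \cdot \ol{\nasla}^a \tilde{T}_{aI} = -\int_{\mc{N}_\epsilon} \ol{\nasla}^a S^I \cdot \tilde{T}_{aI} - \int_{\mc{N}_\epsilon} \nasla^a \paren{\log \vartheta} \cdot S^I \tilde{T}_{aI} \]
furnished by Lemma \ref{thm.int_parts_pnc_hor} becomes exactly the asserted formula after unwinding $\tilde{T}_{aI} = \ol{\nasla}_a T_I$ on both sides.

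There is essentially no obstacle here; the only minor point worth noting is that the substitution is purely formal, and no additional boundary contributions appear because Lemma \ref{thm.int_parts_pnc_hor} has already absorbed them (the proof of that lemma used the divergence theorem on each $\mc{S}_v$, whose boundaries are empty since $\mc{S}_v \cong \Sph^2$ is closed). Therefore the proof reduces to a single invocation of the lemma and can be written in just two or three lines.
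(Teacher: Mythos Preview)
Your proposal is correct and matches the paper's approach exactly: the paper presents this corollary without proof, introducing it simply as a special case of Lemma \ref{thm.int_parts_pnc_hor}, which is precisely the substitution $\tilde{T}_{aI} = \ol{\nasla}_a T_I$ you describe.
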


Next, we give the result for the null direction case:

\begin{lemma}\label{thm.int_parts_pnc_null}
For any integer $r \geq 0$ and $S, T \in \Gamma \ol{\mc{T}}^r \mc{N}^-(p)$,
\[ \int_{\mc{N}_\epsilon} S^I \cdot \ol{\nasla}_L T_I = -\int_{\mc{N}_\epsilon} \ol{\nasla}_L S^I \cdot T_I - \int_{\mc{N}_\epsilon} \paren{\trace \chi} \cdot S^I T_I - \int_{\mc{S}_\epsilon} S^I T_I + \int_{\mc{S}_{v_0}} S^I T_I \text{,} \]
where $I$ denotes a collection of $r$ arbitrary extrinsic indices.
\end{lemma}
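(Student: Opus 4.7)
The plan is to reduce the identity to an application of the Leibniz rule followed by Propositions \ref{thm.pnc_int} and \ref{thm.evolution_pnc_int}. The key observation is that compatibility of $\ol{\nasla}$ with the spacetime metric $g$ (and hence with all contractions of extrinsic tensors) gives the scalar Leibniz rule
\[ \ol{\nasla}_L\paren{S^I T_I} = \ol{\nasla}_L S^I \cdot T_I + S^I \cdot \ol{\nasla}_L T_I \text{,} \]
so that the left-hand side of the claimed identity can be rewritten as
\[ \int_{\mc{N}_\epsilon} S^I \cdot \ol{\nasla}_L T_I = \int_{\mc{N}_\epsilon} L\paren{S^I T_I} - \int_{\mc{N}_\epsilon} \ol{\nasla}_L S^I \cdot T_I \text{,} \]
where we used that $\ol{\nasla}_L \phi = L \phi$ on scalars. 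The $\ol{\nasla}_L S^I \cdot T_I$ piece is already in the desired form, so it remains to show
\[ \int_{\mc{N}_\epsilon} L\paren{S^I T_I} = -\int_{\mc{N}_\epsilon} \paren{\trace \chi} \cdot S^I T_I - \int_{\mc{S}_\epsilon} S^I T_I + \int_{\mc{S}_{v_0}} S^I T_I \text{.} \]

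Next, I would apply Proposition \ref{thm.pnc_int} to express the left-hand side as an iterated integral and use the normalization $\nasla_f = \vartheta \nasla_L$, i.e., $\vartheta L \phi = \nasla_f \phi$ on scalars:
\[ \int_{\mc{N}_\epsilon} L\paren{S^I T_I} = \int_\epsilon^{v_0} \paren{\int_{\mc{S}_v} \vartheta \cdot L\paren{S^I T_I}} dv = \int_\epsilon^{v_0} \paren{\int_{\mc{S}_v} \nasla_f \paren{S^I T_I}} dv \text{.} \]
Now Proposition \ref{thm.evolution_pnc_int} converts the inner integrand:
\[ \int_{\mc{S}_v} \nasla_f \paren{S^I T_I} = \frac{d}{dv} \int_{\mc{S}_v} S^I T_I - \int_{\mc{S}_v} \vartheta \paren{\trace \chi} \cdot S^I T_I \text{.} \]
Integrating in $v$ from $\epsilon$ to $v_0$, the first term telescopes by the fundamental theorem of calculus to $\int_{\mc{S}_{v_0}} S^I T_I - \int_{\mc{S}_\epsilon} S^I T_I$, while the second term, by another application of Proposition \ref{thm.pnc_int}, becomes $\int_{\mc{N}_\epsilon} \paren{\trace \chi} \cdot S^I T_I$ with the correct sign. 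Combining with the Leibniz rule rearrangement above yields the claimed identity.

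There is no genuine obstacle; the only subtlety is bookkeeping of the factor $\vartheta$, which is tailor-made to cancel between $\vartheta L = \nasla_f$ in the ambient integral and the extra $\vartheta$ in Proposition \ref{thm.pnc_int}. Unlike the horizontal integration by parts in Lemma \ref{thm.int_parts_pnc_hor}, no auxiliary $\nasla(\log \vartheta)$ term appears here, precisely because the derivative of $\vartheta$ in the direction $L$ is absorbed into the evolution identity of Proposition \ref{thm.evolution_pnc_int} (whose $\vartheta \trace \chi$ factor encodes the rate of change of the area element along $L$).
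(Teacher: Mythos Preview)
Your proof is correct and follows essentially the same approach as the paper: apply the Leibniz rule to isolate $\ol{\nasla}_L(S^I T_I)$, convert the cone integral to an iterated integral via Proposition \ref{thm.pnc_int} (absorbing the $\vartheta$ into $\nasla_f$), and then invoke Proposition \ref{thm.evolution_pnc_int} to produce the boundary terms and the $\trace\chi$ term. Your closing remark about why no $\nasla(\log\vartheta)$ correction appears is a nice clarifying observation not present in the paper's proof.
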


\begin{proof}
\footnote{Thanks to Qian Wang and her comments for greatly simplifying the proof.}
First, we have
\[ \int_{\mc{N}_\epsilon} S^I \cdot \ol{\nasla}_L T_I = \int_{\mc{N}_\epsilon} \ol{\nasla}_L \paren{S^I T_I} - \int_{\mc{N}_{\epsilon}} \ol{\nasla}_L S^I \cdot T_I \text{.} \]
By Propositions \ref{thm.pnc_int} and \ref{thm.evolution_pnc_int}, the first term on the right-hand side can be handled by
\begin{align*}
\int_{\mc{N}_\epsilon} \ol{\nasla}_L \paren{S^I T_I} &= \int_\epsilon^{v_0} \brak{\int_{\mc{S}_v} \ol{\nasla}_f \paren{S^I T_I}} dv \\
&= \int_\epsilon^{v_0} \valat{\frac{d}{dv^\prime} \paren{\int_{\mc{S}_{v^\prime}} S^I T_I}}_{v^\prime = v} dv - \int_\epsilon^{v_0} \brak{\int_{\mc{S}_v} \vartheta \paren{\trace \chi} S^I T_I} dv \text{,}
\end{align*}
where we recalled that $\ol{\nasla}_f$ denotes the covariant derivative $\vartheta \ol{\nasla}_L$.
The first term on the right-hand side is precisely the desired ``boundary terms".
Finally, an application of Proposition \ref{thm.pnc_int} to the second term completes the proof.
\end{proof}

\subsection{Expansion of the Differential Operator}

For any tensor field $T$ of rank $l$ on $M$, we have on $\mc{N}^-(p)$ the identity
\begin{align*}
\square_g T_I &= -\frac{1}{2} D_{43} T_I - \frac{1}{2} D_{34} T_I + \lambda^{ab} D_{ab} T_I \\
&= -D_{43} T_I + \frac{1}{2} R_{43} \brak{T}_I + \lambda^{ab} D_{ab} T_I \\
&= -\ol{\nasla}_4 \paren{D_3 T}_I + 2 \ul{\eta}^a \ol{\nasla}_a T_I + \frac{1}{2} R_{43} \brak{T}_I + \lambda^{ab} D_{ab} T_I \text{,}
\end{align*}
where $\ol{\nasla}_4$ denotes the operator $\ol{\nasla}_L$, where $R_{43}[T]_I$ denotes the curvature quantity $D_{43} T_I - D_{34} T_I$, and where in the last step, we applied the identity $D_L \ul{L} = 2 \ul{\eta}^a e_a$.

By the definitions of the connections $D$, $\ol{D}$, and $\ol{\nasla}$, we have the identities
\begin{align*}
D_{ab} T_I &= D_a \paren{D_b T}_I - D_{\ol{D}_a e_b} T_I \text{,} \\
\ol{\nasla}_{ab} T_I &= \ol{\nasla}_a \paren{\ol{\nasla}_b T}_I - \ol{\nasla}_{\ol{\nasla}_a e_b} T_I = D_a \paren{D_b T}_I - D_{\nasla_a e_b} T_I \text{,}
\end{align*}
so by \eqref{eq.D_nf}, we have the relation
\[ D_{ab} T_I = \ol{\nasla}_{ab} T_I - D_{\ol{D}_a e_b - \nasla_a e_b} T_I = \ol{\nasla}_{ab} T_I - \frac{1}{2} \ul{\chi}_{ab} \ol{\nasla}_4 T_I - \frac{1}{2} \chi_{ab} D_3 T_I \text{,} \]
As a result, we have the decomposition
\begin{align}
\label{eq.square_expand} \square T_I &= \ol{\lasl} T_I - \ol{\nasla}_4 \paren{D_3 T}_I + 2 \ul{\eta}^a \ol{\nasla}_a T_I - \frac{1}{2} \paren{\trace \ul{\chi}} \ol{\nasla}_4 T_I \\
\notag &\qquad - \frac{1}{2} \paren{\trace \chi} D_3 T_I + \frac{1}{2} R_{43} \brak{T}_I \text{.}
\end{align}

As a result, the quantity
\begin{align*}
\mf{F}^{\mc{L}}_\epsilon &= \sum_{m = 1}^n \int_{\mc{N}_\epsilon} \sss{A}{m}{}^I \sss{\Psi}{m}_I \\
&= \sum_{m = 1}^n \int_{\mc{N}_\epsilon} \sss{A}{m}{}^I \square_g \sss{\Phi}{m}_I + \sum_{m, c = 1}^n \int_{\mc{N}_\epsilon} \sss{A}{m}{}^I \sss{P}{mc}_{\mu I}{}^J D^\mu \sss{\Phi}{c}_J \text{,}
\end{align*}
where we have used \eqref{eq.tensor_wave_system}, can be expanded by \eqref{eq.square_expand} as 
\begin{align*}
\mf{F}^{\mc{L}}_\epsilon &= \sum_{m = 1}^n \int_{\mc{N}_\epsilon} \sss{A}{m}^I \brak{\ol{\lasl} \sss{\Phi}{m}_I - \ol{\nasla}_4 \paren{D_3 \sss{\Phi}{m}}_I + 2 \ul{\eta}^a \ol{\nasla}_a \sss{\Phi}{m}_I + \frac{1}{2} R_{43} \brak{\sss{\Phi}{m}}_I} \\
&\qquad + \sum_{m = 1}^n \int_{\mc{N}_\epsilon} \sss{A}{m}^I \brak{-\frac{1}{2} \paren{\trace \ul{\chi}} \ol{\nasla}_4 \sss{\Phi}{m}_I - \frac{1}{2} \paren{\trace \chi} D_3 \sss{\Phi}{m}_I} \\
&\qquad + \sum_{m, c = 1}^n \int_{\mc{N}_\epsilon} \sss{A}{c}{}^J \sss{P}{cm}_{\mu J}{}^I D^\mu \sss{\Phi}{m}_I \\
&= \mf{A}^1_\epsilon + \mf{A}^2_\epsilon + \mf{A}^3_\epsilon + \mf{F}^R_\epsilon + \mf{A}^4_\epsilon + \mf{A}^5_\epsilon + \mf{A}^6_\epsilon \text{.}
\end{align*}
Moreover, we can expand $\mf{A}^6_\epsilon$ as
\begin{align*}
\mf{A}^6_\epsilon &= \sum_{m, c = 1}^n \int_{\mc{N}_\epsilon} \sss{A}{c}{}^J \sss{P}{cm}_{aJ}{}^I \ol{\nasla}^a \sss{\Phi}{m}_I \\
&\qquad + \sum_{m, c = 1}^n \int_{\mc{N}_\epsilon} \sss{A}{c}{}^J \paren{-\frac{1}{2} \sss{P}{cm}_{3J}{}^I \ol{\nasla}_4 \sss{\Phi}{m}_I - \frac{1}{2} \sss{P}{cm}_{4J}{}^I D_3 \sss{\Phi}{m}_I} \\
&= \mf{C}^1_\epsilon + \mf{C}^2_\epsilon + \mf{C}^3_\epsilon \text{.}
\end{align*}

\subsection{Integrations by Parts}

The main goal in principle is as follows.
As of now, all the existing covariant derivatives thus far are acting on the $\sss{\Phi}{m}$'s.
We wish to move all covariant derivatives in directions tangent to $\mc{N}^-(p)$ (that is, $L$, $e_1$, and $e_2$) away from the $\sss{\Phi}{m}$'s.
This will be accomplished through multiple applications of Lemmas \ref{thm.int_parts_pnc_hor} and \ref{thm.int_parts_pnc_null}.
We will then see that the transport equation \eqref{eq.transport} will eliminate the worst remaining terms: those involving $\ul{L}$-derivatives of the $\sss{\Phi}{m}$'s.

If we apply Corollary \ref{thm.int_parts_pnc_lasl} to $\mf{A}^1_\epsilon$, we obtain
\[ \mf{A}^1_\epsilon = \sum_{m = 1}^n \int_{\mc{N}_\epsilon} \brak{- \ol{\nasla}^a \sss{A}{m}^I \ol{\nasla}_a \sss{\Phi}{m}_I - \ol{\nasla}^a \paren{\log \vartheta} \sss{A}{m}^I \ol{\nasla}_a \sss{\Phi}{m}_I} \text{.} \]
Combining this with \eqref{eq.null_torsion}, then
\begin{align*}
\mf{A}^1_\epsilon + \mf{A}^3_\epsilon &= \sum_{m = 1}^n \brak{- \int_{\mc{N}_\epsilon} \ol{\nasla}^a \sss{A}{m}^I \ol{\nasla}_a \sss{\Phi}{m}_I + \int_{\mc{N}_\epsilon} \paren{\ul{\eta}^a - \zeta^a} \sss{A}{m}^I \ol{\nasla}_a \sss{\Phi}{m}_I} \\
&= \mf{F}^\nabla_\epsilon + \mf{B}^1_\epsilon \text{.}
\end{align*}
Applying Lemma \ref{thm.int_parts_pnc_hor} to $\mf{B}^1_\epsilon$ yields
\begin{align*}
\mf{B}^1_\epsilon &= \sum_{m = 1}^n \int_{\mc{N}_\epsilon} \sss{A}{m}^I \brak{\nasla_a\paren{\log \vartheta} \cdot \paren{\zeta^a - \ul{\eta}^a} \sss{\Phi}{m}_I + \paren{\nasla^a \zeta_a - \nasla^a \ul{\eta}_a} \sss{\Phi}{m}_I} \\
&\qquad + \sum_{m = 1}^n \int_{\mc{N}_\epsilon} \paren{\zeta^a - \ul{\eta}^a} \ol{\nasla}_a \sss{A}{m}^I \sss{\Phi}{m}_I \\
&= \mf{B}^2_\epsilon + \mf{B}^3_\epsilon + \mf{F}^\zeta_\epsilon \text{.}
\end{align*}

Likewise, we apply Lemma \ref{thm.int_parts_pnc_null} to $\mf{A}^2_\epsilon$ to obtain
\begin{align*}
\mf{A}^2_\epsilon &= \sum_{m = 1}^n \brak{\int_{\mc{N}_\epsilon} \ol{\nasla}_4 \sss{A}{m}^I D_3 \sss{\Phi}{m}_I + \int_{\mc{N}_\epsilon} \paren{\trace \chi} \sss{A}{m}^I D_3 \sss{\Phi}{m}_I} \\
&\qquad + \sum_{m = 1}^n \paren{\int_{\mc{S}_\epsilon} \sss{A}{m}^I D_3 \sss{\Phi}{m}_I - \int_{\mc{S}_{v_0}} \sss{A}{m}^I D_3 \sss{\Phi}{m}_I} \text{.}
\end{align*}
Taking note of the transport equation \eqref{eq.transport}, then
\[ \mf{A}^2_\epsilon + \mf{A}^5_\epsilon + \mf{C}^3_\epsilon = \sum_{m = 1}^n \paren{\int_{\mc{S}_\epsilon} \sss{A}{m}^I D_3 \sss{\Phi}{m}_I - \int_{\mc{S}_{v_0}} \sss{A}{m}^I D_3 \sss{\Phi}{m}_I} = \mf{L}^1_\epsilon + \mf{I}^1 \text{.} \]

\begin{remark}
The above computation also provides the justification for the transport equation \eqref{eq.transport}.
Indeed, this is exactly what is needed to eliminate the terms involving integrals over $\mc{N}_\epsilon$ of the $D_3 \sss{\Phi}{m}$'s.
Recall from standard local energy estimates that $\ul{L}$-derivatives on the null cone are generally the worst behaved.
\end{remark}

\begin{remark}
Recall also that \cite{kl_rod:ksp} justified its transport equation as the condition needed to eliminate terms involving the distribution $\delta^\prime(u)$, where $u$ is the ``optical function" used throughout \cite{kl_rod:ksp}.
Note, however, that $\delta^\prime(u)$ is precisely $-\frac{1}{2} \cdot D_3 \delta(u)$, which corresponds to an integral over the null cone of some terms, with the principal term being the $\ul{L}$-derivative of the test function.
\end{remark}

We also apply Lemma \ref{thm.int_parts_pnc_null} to $\mf{A}^4_\epsilon$:
\begin{align*}
\mf{A}^4_\epsilon &= \sum_{m = 1}^n \int_{\mc{N}_\epsilon} \brak{\frac{1}{2} \nasla_4 \paren{\trace \ul{\chi}} \sss{A}{m}^I \sss{\Phi}{m}_I + \frac{1}{2} \paren{\trace \ul{\chi}} \ol{\nasla}_4 \sss{A}{m}^I \sss{\Phi}{m}_I} \\
&\qquad + \sum_{m = 1}^n \int_{\mc{N}_\epsilon} \frac{1}{2} \paren{\trace \chi} \paren{\trace \ul{\chi}} \sss{A}{m}^I \sss{\Phi}{m}_I \\
&\qquad + \sum_{m = 1}^n \brak{\frac{1}{2} \int_{\mc{S}_\epsilon} \paren{\trace \ul{\chi}} \sss{A}{m}^I \sss{\Phi}{m}_I - \frac{1}{2} \int_{\mc{S}_{v_0}} \paren{\trace \ul{\chi}} \sss{A}{m}^I \sss{\Phi}{m}_I} \\
&= \mf{B}^4_\epsilon + \mf{B}^5_\epsilon + \mf{B}^6_\epsilon + \mf{L}^0_\epsilon + \mf{I}^0 \text{.}
\end{align*}
We can further expand $\mf{B}^4_\epsilon$ using \eqref{eq.str_ev_chib_tr}:
\begin{align*}
\mf{B}^4_\epsilon + \mf{B}^6_\epsilon &= \sum_{m = 1}^n \int_{\mc{N}_\epsilon} \brak{\nasla^a \ul{\eta}_a + \abs{\ul{\eta}}^2 + \frac{1}{4} \paren{\trace \chi} \paren{\trace \ul{\chi}} - \frac{1}{2} \hat{\chi}^{ab} \hat{\ul{\chi}}_{ab}} \sss{A}{m}^I \sss{\Phi}{m}_I \\
&\qquad + \sum_{m = 1}^n \int_{\mc{N}_\epsilon} \paren{\frac{1}{4} R_{4343} - \frac{1}{2} R_{43}} \sss{A}{m}^I \sss{\Phi}{m}_I \\
&= \mf{B}^7_\epsilon + \mf{B}^8_\epsilon + \mf{B}^9_\epsilon + \mf{M}^1_\epsilon + \mf{M}^2_\epsilon + \mf{M}^3_\epsilon \text{.}
\end{align*}
By the transport equation \eqref{eq.transport}, then
\[ \mf{B}^5_\epsilon + \mf{B}^9_\epsilon = \frac{1}{4} \sum_{\mf{m}, \mf{c} = 1}^\mf{n} \int_{\mc{N}_\epsilon} \paren{\trace \ul{\chi}} \sss{P}{cm}_{4J}{}^I \sss{A}{c}^J \sss{\Phi}{m}_I = \mf{P}^1_\epsilon \text{.} \]

If we apply Lemma \ref{thm.int_parts_pnc_hor} to $\mf{C}^1_\epsilon$, we obtain
\begin{align*}
\mf{C}^1_\epsilon &= \sum_{m, c = 1}^n \int_{\mc{N}_\epsilon} \sss{A}{c}^J \brak{-\nasla^a \paren{\log \vartheta} \cdot \sss{P}{cm}_{aJ}{}^I \sss{\Phi}{m}_I - \ol{\nasla}^a \sss{P}{cm}_{aJ}{}^I \sss{\Phi}{m}_I} \\
&\qquad - \sum_{m, c = 1}^n \int_{\mc{N}_\epsilon} \sss{P}{cm}_{aJ}{}^I \ol{\nasla}^a \sss{A}{c}^J \sss{\Phi}{m}_I \\
&= \mf{C}^4_\epsilon + \mf{P}^2_\epsilon + \mf{F}^{\nabla P}_\epsilon \text{.}
\end{align*}
Moreover, applying Lemma \ref{thm.int_parts_pnc_null} to $\mf{C}^2_\epsilon$, we have
\begin{align*}
\mf{C}^2_\epsilon &= - \frac{1}{2} \sum_{m, c = 1}^n \int_{\mc{N}_\epsilon} \ul{L}^\alpha \sss{A}{c}^J \sss{P}{cm}_{\alpha J}{}^I \ol{\nasla}_4 \sss{\Phi}{m}_I \\
&= \sum_{m, c = 1}^n \int_{\mc{N}_\epsilon} \brak{\frac{1}{2} \paren{\trace \chi} \sss{P}{cm}_{3J}{}^I \sss{A}{c}^J \sss{\Phi}{m}_I + \ul{\eta}^a \sss{P}{cm}_{aJ}{}^I \sss{A}{c}^J \sss{\Phi}{m}_I} \\
&\qquad + \sum_{m, c = 1}^n \int_{\mc{N}_\epsilon} \paren{\frac{1}{2} \ol{\nasla}_4 \sss{P}{cm}_{3J}{}^I \sss{A}{c}^J \sss{\Phi}{m}_I + \frac{1}{2} \sss{P}{cm}_{3J}{}^I \ol{\nasla}_4 \sss{A}{c}^J \sss{\Phi}{m}_I} \\
&\qquad + \sum_{m, c = 1}^n \paren{\frac{1}{2} \int_{\mc{S}_\epsilon} \sss{P}{cm}_{3J}{}^I \sss{A}{c}^J \sss{\Phi}{m}_I - \frac{1}{2} \int_{\mc{S}_{v_0}} \sss{P}{cm}_{3J}{}^I \sss{A}{c}^J \sss{\Phi}{m}_I} \\
&= \mf{C}^5_\epsilon + \mf{C}^6_\epsilon + \mf{P}^3_\epsilon + \mf{C}^7_\epsilon + \mf{L}^2_\epsilon + \mf{I}^2 \text{.}
\end{align*}
Applying the transport equation \eqref{eq.transport}, we have
\begin{align*}
\mf{C}^5_\epsilon + \mf{C}^7_\epsilon &= \sum_{m, c = 1}^n \frac{1}{4} \int_{\mc{N}_\epsilon} \paren{\trace \chi} \sss{P}{cm}_{3J}{}^I \sss{A}{c}^J \sss{\Phi}{m}_I \\
&\qquad + \sum_{m, c, d = 1}^n \frac{1}{4} \int_{\mc{N}_\epsilon} \sss{P}{cd}_{4J}{}^K \sss{P}{dm}_{3K}{}^I \sss{A}{c}^J \sss{\Phi}{m}_I \\
&= \mf{P}^4_\epsilon + \mf{P}^5_\epsilon \text{.}
\end{align*}

As of now, we have the rather unsightly expansion
\begin{align}
\label{eql.ksp_gen_ibp} \mf{F}^{\mc{L}}_\epsilon &= \mf{F}^\nabla_\epsilon + \mf{F}^{\nabla P}_\epsilon + \mf{F}^\zeta_\epsilon + \mf{F}^R_\epsilon + \mf{B}^2_\epsilon + \mf{B}^3_\epsilon + \mf{B}^7_\epsilon + \mf{B}^8_\epsilon + \mf{C}^4_\epsilon + \mf{C}^6_\epsilon + \mf{M}^1_\epsilon + \mf{M}^2_\epsilon \\
\notag &\qquad + \mf{M}^3_\epsilon + \mf{P}^1_\epsilon + \mf{P}^2_\epsilon + \mf{P}^3_\epsilon + \mf{P}^4_\epsilon + \mf{P}^5_\epsilon + \mf{L}^0_\epsilon + \mf{L}^1_\epsilon + \mf{L}^2_\epsilon + \mf{I}^0 + \mf{I}^1 + \mf{I}^2 \text{.}
\end{align}

\subsection{The Error Terms}

The next step is to aggregate the terms in \eqref{eql.ksp_gen_ibp} into error terms corresponding to those comprising \eqref{eq.ksp_gen}.
First, recalling \eqref{eq.null_torsion}, we see that
\begin{align*}
\mf{B}^3_\epsilon + \mf{B}^7_\epsilon &= \sum_{m = 1}^n \int_{\mc{N}_\epsilon} \paren{\nasla^a \zeta_a} \sss{A}{m}^I \sss{\Phi}{m}_I = \mf{M}^4_\epsilon \text{,} \\
\mf{B}^2_\epsilon + \mf{B}^8_\epsilon &= \sum_{m = 1}^n \int_{\mc{N}_\epsilon} \abs{\zeta}^2 \sss{A}{m}^I \sss{\Phi}{m}_I = \mf{M}^5_\epsilon \text{.}
\end{align*}
Recalling the mass aspect function $\mu$ defined in \eqref{eq.maf}, we have
\[ \mf{M}^1_\epsilon + \mf{M}^2_\epsilon + \mf{M}^3_\epsilon + \mf{M}^4_\epsilon + \mf{M}^5_\epsilon = \sum_{m = 1}^n \int_{\mc{N}_\epsilon} \mu \sss{A}{m}^I \sss{\Phi}{m}_I = \mf{F}^\mu_\epsilon \text{.} \]
Next, by \eqref{eq.null_torsion},
\[ \mf{C}^4_\epsilon + \mf{C}^6_\epsilon = - \sum_{m, c = 1}^n \int_{\mc{N}_\epsilon} \zeta^a \sss{P}{cm}_{aJ}{}^I \sss{A}{c}^J \sss{\Phi}{m}_I \text{,} \]
hence we obtain
\[ \mf{C}^4_\epsilon + \mf{C}^6_\epsilon + \mf{P}^1_\epsilon + \mf{P}^2_\epsilon + \mf{P}^3_\epsilon + \mf{P}^4_\epsilon + \mf{P}^5_\epsilon = \sum_{m, c = 1}^n \int_{\mc{N}_\epsilon} \sss{\nu}{cm}_J{}^I \sss{A}{c}^J \sss{\Phi}{m}_I = \mf{F}^\nu_\epsilon \text{.} \]

Finally, we obtain the more manageable equation
\begin{equation}\label{eql.ksp_gen_aux} \mf{F}^{\mc{L}}_\epsilon = \mf{F}^\nabla_\epsilon + \mf{F}^\zeta_\epsilon + \mf{F}^{\nabla P}_\epsilon + \mf{F}^\mu_\epsilon + \mf{F}^\nu_\epsilon + \mf{F}^R_\epsilon + \mf{L}^0_\epsilon + \mf{L}^1_\epsilon + \mf{L}^2_\epsilon + \mf{I}^0 + \mf{I}^1 + \mf{I}^2 \text{.} \end{equation}
Note that:
\begin{itemize}
\item The ``$\mf{F}$"-terms are integrals over the regular past null cone segment $\mc{N}_\epsilon$.

\item The ``$\mf{L}$"-terms are the vertex limit terms, expressed as integrals over $\mc{S}_\epsilon$.

\item The ``$\mf{I}$"-terms are the initial data terms, expressed as integrals over $\mc{S}_{v_0}$.
\end{itemize}
The limits as $\epsilon \searrow 0$ of the ``$\mf{F}$"-terms in \eqref{eql.ksp_gen_aux} are clear - these involve simply replacing the integral over $\mc{N}_\epsilon$ by the same integral over $\mc{N}^-(p; v_0)$.
These account for the terms $\mf{F}(p; v_0)$, $\mf{E}^1(p; v_0)$, and $\mf{E}^2(p; v_0)$ in Theorem \ref{thm.ksp_gen}.
Moreover, the ``$\mf{I}$"-terms in \eqref{eql.ksp_gen_aux} correspond exactly to $\mf{I}(p; v_0)$ in Theorem \ref{thm.ksp_gen}.

As a result, the limit as $\epsilon \searrow 0$ of \eqref{eql.ksp_gen_aux} evaluates to
\begin{equation}\label{eql.ksp_gen_vl} - \lim_{\epsilon \searrow 0} \paren{\mf{L}^0_\epsilon + \mf{L}^1_\epsilon + \mf{L}^2_\epsilon} = \mf{F}\paren{p; v_0} + \mf{E}^1\paren{p; v_0} + \mf{E}^2\paren{p; v_0} + \mf{I}\paren{p; v_0} \text{.} \end{equation}
Now, it remains only to compute the limits on the left-hand side of \eqref{eql.ksp_gen_vl}.

\subsection{Vertex Limits}

To compute the limits of the ``$\mf{L}$"-terms, we first obtain the initial values of the integrands using Proposition \ref{thm.pnc_init}, and then we apply the integral limit property \eqref{eq.pnc_init_int} to each term.

We begin with $\mf{L}^1_\epsilon$, which we write as
\[ \mf{L}^1_\epsilon = \sum_{m = 1}^n \int_{\mc{S}_\epsilon} \sss{A}{m}^I D_3 \sss{\Phi}{m}_I = \sum_{m = 1}^n \epsilon^{-1} \int_{\mc{S}_\epsilon} \sss{B}{m}^I D_3 \sss{\Phi}{m}_I \text{.} \]
Along each null generator $\gamma_\omega$, $\omega \in \Sph^2$, we have the limits
\[ \lim_{v \searrow 0} \valat{\sss{B}{m}}_{\paren{v, \omega}} \rightarrow \sss{J}{m} \text{,} \qquad \lim_{v \searrow 0} \valat{D_3 \sss{\Phi}{m}}_{\paren{v, \omega}} \rightarrow \valat{D_3 \sss{\Phi}{m}}_p \text{,} \qquad 1 \leq m \leq n \text{.} \]
As a result, by \eqref{eq.pnc_init_int},
\[ \lim_{\epsilon \searrow 0} \mf{L}^1_\epsilon = 0 \text{.} \]
Similarly, for $\mf{L}^2_\epsilon$, we have
\[ \lim_{\epsilon \searrow 0} \mf{L}^2_\epsilon = \frac{1}{2} \sum_{m, c = 1}^n \lim_{\epsilon \searrow 0} \epsilon^{-1} \int_{\mc{S}_\epsilon} \sss{P}{cm}_{3J}{}^I \sss{B}{c}^J \sss{\Phi}{m}_I = 0 \text{.} \]

Next, for $\mf{L}^0_\epsilon$, we write
\[ \mf{L}^0_\epsilon = \frac{1}{2} \sum_{m = 1}^n \epsilon^{-2} \int_{\mc{S}_\epsilon} \brak{\epsilon \paren{\trace \ul{\chi}}} \sss{B}{m}^I \sss{\Phi}{m}_I \text{.} \]
Equation \eqref{eq.pnc_init_chib} implies $\epsilon (\trace \ul{\chi})$ converges to $-2 \vartheta_0^{-1}$ at the vertex, hence by \eqref{eq.pnc_init_int},
\[ \lim_{\epsilon \searrow 0} \mf{L}^0_\epsilon = \frac{1}{2} \vartheta_0^2 \sum_{m = 1}^n \int_{\Sph^2} \paren{-2 \vartheta_0^{-1}} \sss{J}{m}^I \valat{\sss{\Phi}{m}_I}_p d\omega = - 4 \pi \vartheta_0 \sum_{m = 1}^n \sss{J}{m}^I \valat{\sss{\Phi}{m}_I}_p \text{.} \]
Therefore, we obtain the desired limit
\[ -\lim_{\epsilon \searrow 0} \paren{\mf{L}^0_\epsilon + \mf{L}^1_\epsilon + \mf{L}^2_\epsilon} = 4 \pi \vartheta_0 \sum_{m = 1}^n \sss{J}{m}^I \valat{\sss{\Phi}{m}_I}_p \text{.} \]

As a result, equation \eqref{eql.ksp_gen_vl} becomes \eqref{eq.ksp_gen}, and the proof of \eqref{eq.ksp_gen}-\eqref{eq.ksp_gen_I} is complete.
Lastly, for the alternate representation \eqref{eq.ksp_gen_E1_ksp} of the main error terms, we simply apply Corollary \ref{thm.int_parts_pnc_lasl} to \eqref{eq.ksp_gen_E1_app}.
This concludes the proof of Theorem \ref{thm.ksp_gen}.

\begin{remark}
We can also apply the method of proof in this paper to derive the representation formula {\bf KR} of \cite{kl_rod:ksp}.
The calculation is then significantly simplified, since the term $\mf{A}^6_\epsilon$ in the proof now vanishes, as do the terms resulting from $\mf{A}^6_\epsilon$.
\end{remark}

\section{Generalization to Vector Bundles}

The representation formula {\bf KR} was further extended in \cite[Thm. 4.1]{kl_rod:ksp} to the case of covariant wave equations on sections of arbitrary vector bundles.
For convenience, we denote this extended parametrix by {\bf KRV}.
We can produce a similar extension of Theorem \ref{thm.ksp_gen} to sections of vector bundles, using a process analogous to that of \cite{kl_rod:ksp}.
We summarize this method in this section, and we connect this extended version to Theorem \ref{thm.ksp_gen} as well as to {\bf KRV}.
The final result represents a culmination of the ideas which led to {\bf KR}, {\bf KRV}, and Theorem \ref{thm.ksp_gen}.

\subsection{General Ideas}

The extension from {\bf KR} to {\bf KRV} is essential for the main application presented in \cite{kl_rod:ksp}: a gauge-invariant proof of the $L^\infty$-estimates for the Yang-Mills curvature required in Eardley and Moncrief's classical global existence result for the Yang-Mills equations in \cite{ea_mo:g_ymh}.
\footnote{In fact, the papers \cite{ea_mo:l_ymh, ea_mo:g_ymh} dealt with the more general Yang-Mills-Higgs equations.}
Recall that \cite{ea_mo:g_ymh} applied the standard representation formula for the scalar wave equation in Minkowski space and, as a result, required the Cronstr\"om gauge condition in order to control the gauge potential.
The same was also true for its curved spacetime analogue \cite{chru_sh:ym_curv}.
Using {\bf KRV}, however, one can avoid the gauge potential altogether and hence bypass the need for a favorable gauge condition.

Note that the parametrix {\bf KR} itself is not directly applicable to the Yang-Mills setting.
In order to achieve this gauge-invariance property, one requires a version of the representation formula suitable for Lie algebra-valued tensor fields and the associated gauge covariant derivatives.
Not only does the extended parametrix {\bf KRV} of \cite[Thm. 4.1]{kl_rod:ksp} address this issue, but its derivation is also completely straightforward in light of the proof of {\bf KR}.

The key observations behind this extension concern the natures of the operations used in the derivation of {\bf KR}.
These can be summarized by the following statement:
\begin{itemize}
\item On the past null cone $\mc{N}^-(p)$ for which we derive the parametrix, the (mixed) covariant derivatives on $\mc{N}^-(p)$ satisfy natural Leibniz rules and are compatible with both the spacetime metric $g$ and the induced horizontal metrics $\lambda$ on $\mc{N}^-(p)$.
In particular, this legitimizes the integration by parts operations fundamental to the derivation of {\bf KR}.
\end{itemize}
In other words, if covariant derivatives on a vector bundle behave in a manner analogous to the above statement, then the proof of {\bf KR} can be directly adapted to prove {\bf KRV}.
In particular, the gauge covariant derivatives on Lie algebra-valued tensor fields in the Yang-Mills setting satisfy a version of this property, as long as the associated metrics are modified using an appropriate positive-definite scalar product on the Lie algebra under consideration.

It is unsurprising, then, that an analogous extension can be made to the representation formula of Theorem \ref{thm.ksp_gen}.
The key observations behind this extension remain the same as before, so once the framework is set, the extension becomes completely straightforward.
Next, we will briefly summarize the basic constructions for this extension of Theorem \ref{thm.ksp_gen}.

\subsection{Basic Constructions}

Let $\mc{V}$ denote a vector bundle over $M$, and let $\mc{D}$ and $\brcmp{\cdot, \cdot}$ denote a connection and a bundle metric on $\mc{V}$, respectively.
We additionally stipulate that $\mc{D}$ and $\brcmp{\cdot, \cdot}$ satisfy the following compatibility condition: for any sections $S, T \in \Gamma \mc{V}$, we have the Leibniz-type identity
\[ D_\alpha \brcmp{S, T} = \brcmp{\mc{D}_\alpha S, T} + \brcmp{S, \mc{D}_\alpha T} \text{.} \]
Recall that $\Gamma \mc{V}$ denotes the space of all smooth sections of $\mc{V}$.

Next, we let $\ol{\mc{V}}$ denote the restriction of $\mc{V}$ to $\mc{N}^-(p)$, and we note that $\mc{D}$ induces a connection $\ol{\mc{D}}$ on $\ol{\mc{V}}$.
We will also require the following constructions:
\begin{itemize}
\item We can define various ``mixed bundles" over $\mc{N}^-(p)$, expressed as
\[ \ul{T}^k \ol{T}^r \ol{\mc{V}} = \ul{T}^k \mc{N}^-\paren{p} \otimes \ol{T}^r \mc{N}^-\paren{p} \otimes \ol{\mc{V}} \text{,} \qquad k, r \geq 0 \text{,} \]
where the right-hand side denotes tensor products of vector bundles.

\item Moreover, we can naturally construct ``mixed covariant derivatives" $\ol{\nasla}$ on the above mixed bundles, which behave like $\nasla$ on the ``horizontal" components, $\ol{D}$ on the ``extrinsic" components, and $\ol{\mc{D}}$ on $\ol{\mc{V}}$.
In particular, for a decomposable element $A \otimes B \otimes C \in \Gamma \ul{T}^k \ol{T}^r \ol{\mc{V}}$ and a vector field $X$ on $\mc{N}^-(p)$, we define the mixed covariant derivative by
\[ \ol{\nasla}_X \paren{A \otimes B \otimes C} = \nasla_X A \otimes B \otimes C + A \otimes \ol{D}_X B \otimes C + A \otimes B \otimes \ol{\mc{D}}_X C \text{.} \]

\item The metrics $\brcmp{\cdot, \cdot}$, $g$, and $\gamma$ induce pairings for the mixed bundles, which we also denote by $\brcmp{\cdot, \cdot}$, and which, similar to the original bundle metric, satisfy compatibility conditions with respect to the above mixed connections.

\item We can define the $\mc{V}$-curvature $\mc{R}$ to describe commutations of two $\mc{D}$-derivatives.
More specifically, we can define for any $T \in \Gamma \mc{V}$ and vector fields $X, Y$ on $M$ the quantity $\mc{R}_{XY}[T] \in \Gamma \mc{V}$ by the formula
\[ \mc{R}_{XY} \brak{T} = \mc{D}_X \paren{\mc{D}_Y T} - \mc{D}_Y \paren{\mc{D}_X T} - \mc{D}_{\brak{X, Y}} T \text{.} \]
\end{itemize}

We can recover the setting of {\bf KR} by taking $\mc{V}$ to be the bundle $T^r M$ of all tensors on $M$ of total rank $r$, with $\mc{D} = D$ the connection on $T^r M$ induced by the Levi-Civita connection and $\brcmp{\cdot, \cdot} = g$ the full metric contraction operation on $T^r M$.
Note that $\mc{D}$ and $\brcmp{\cdot, \cdot}$, as given here, are indeed compatible.
For this special case, the above mixed bundles and their associated connections and metrics coincide with the corresponding ``mixed" objects defined in previous sections.
This construction of mixed bundles and derivatives along with the compatibility of the associated connections and metrics are the driving forces behind the extension of Theorem \ref{thm.ksp_gen}.

We will denote the object satisfying the covariant wave equation on $\mc{V}$ to be $\Phi \in \Gamma \mc{V}$, and we denote the associated nonlinearity by $\Psi \in \Gamma \mc{V}$.
It remains now to describe the first-order coefficients of the wave equation, which we denote by $P$.
In this setting, we can define $P$ to be a ``$(\mc{V} \otimes \mc{V})$-valued $1$-form", that is, a section of the vector bundle $T^\ast M \otimes \mc{V} \otimes \mc{V}$, where $T^\ast M$ is the cotangent bundle of $M$.
In particular, the restriction of any component $P_\alpha$ to $\mc{N}^-(p)$ is a section of $\ol{\mc{V}} \otimes \ol{\mc{V}}$.

Using the bundle metric $\brcmp{\cdot, \cdot}$, we can define the natural bilinear pairings
\[ \left| \cdot, \cdot \right\rangle : \Gamma \paren{\ol{\mc{V}} \otimes \ol{\mc{V}}} \times \Gamma \ol{\mc{V}} \rightarrow \Gamma \ol{\mc{V}} \text{,} \qquad \left\langle \cdot, \cdot \right| : \Gamma \ol{\mc{V}} \times \Gamma \paren{\ol{\mc{V}} \otimes \ol{\mc{V}}} \rightarrow \Gamma \ol{\mc{V}} \text{,} \]
given for decomposable elements by
\[ \left| A \otimes B, C \right\rangle = A \cdot \brcmp{B, C} \text{,} \qquad \left\langle A, B \otimes C \right| = \brcmp{A, B} \cdot C \text{,} \qquad A, B, C \in \Gamma \ol{\mc{V}} \text{,} \]
and extended accordingly to general elements.
We will also need the maps
\begin{align*}
\brcmp{\cdot \mid \cdot \mid \cdot} &: \Gamma \ol{\mc{V}} \times \Gamma \paren{\ol{\mc{V}} \otimes \ol{\mc{V}}} \times \Gamma \ol{\mc{V}} \rightarrow C^\infty\paren{\mc{N}^-\paren{p}} \text{,} \\
\brac{\cdot, \cdot} &: \Gamma \paren{\ol{\mc{V}} \otimes \ol{\mc{V}}} \times \Gamma \paren{\ol{\mc{V}} \otimes \ol{\mc{V}}} \rightarrow \Gamma \paren{\ol{\mc{V}} \otimes \ol{\mc{V}}} \text{,}
\end{align*}
defined for decomposable elements by
\[ \brcmp{A \mid B \otimes C \mid D} = \brcmp{A, B} \brcmp{C, D} \text{,} \qquad \brac{A \otimes B, C \otimes D} = \brcmp{B, C} \cdot A \otimes D \text{,} \]
and extended as multilinear functions to general elements.

For example, consider the standard case $\mc{V} = T^r M$ of {\bf KR}.
Let $S, T \in \Gamma \ol{\mc{V}}$, let $A, B \in \Gamma (\ol{\mc{V}} \otimes \ol{\mc{V}})$, and let $I, J, K$ denote collections of $r$ extrinsic indices.
Then, the above operations are given explicitly via tensorial contractions, i.e.,
\begin{align*}
\left| A, T \right\rangle_I = A_I{}^J T_J \text{,} &\qquad \left\langle T, A \right|_J = T_I A^I{}_J \text{,} \\
\brcmp{S \mid A \mid T} = S_I A^{IJ} T_J \text{,} &\qquad \brac{A, B}_{IJ} = A_{IK} B^K{}_J \text{.}
\end{align*}

We have a natural connection $\ol{\mc{D}}$ on $\ol{\mc{V}} \otimes \ol{\mc{V}}$ induced from the connection $\ol{\mc{D}}$ on $\ol{\mc{V}}$, given by the Leibniz-type formula
\[ \ol{\mc{D}} \paren{A \otimes B} = \ol{\mc{D}} A \otimes B + A \otimes \ol{\mc{D}} B \]
for the decomposable elements and extended linearly to general sections.
Then, we can once again define mixed bundles and mixed covariant derivatives $\ol{\nasla}$ corresponding to the bundles $\ol{\mc{V}} \otimes \ol{\mc{V}}$.
It is a simple exercise to determine that the expected compatibility conditions hold for $| \cdot, \cdot \rangle$, $\langle \cdot, \cdot |$, $\langle \cdot \mid \cdot \mid \cdot \rangle$, and $\brac{\cdot, \cdot}$ with respect to the various connections and covariant derivatives.
We omit the rather tedious details.

\subsection{The Generalized Equations}

The covariant wave equation under consideration is now given by
\begin{equation}\label{eq.tensor_wave_vb} \Box^{\mc{D}}_g \Phi + \left| P, \mc{D} \Phi \right\rangle = g^{\mu\nu} \mc{D}_{\mu\nu} \Phi + \left| P_\mu, \mc{D}^\mu \Phi \right\rangle = \Psi \text{,} \end{equation}
where $\Phi, \Psi \in \Gamma \mc{V}$ and $P \in \Gamma (\mc{V} \otimes \mc{V})$.
The precise definition of $\Box^{\mc{D}}_g$ can be a bit subtle.
Here, the inner covariant derivative $\mc{D}_\beta$ is that of the connection on $\Gamma \mc{V}$, but the outer covariant derivative $\mc{D}_\alpha$ is the induced connection on $\Gamma (T^\ast M \otimes \Gamma \mc{V})$, which acts like $D$ on $T^\ast M$ and like $\mc{D}$ on $\mc{V}$.
In other words, for vector fields $X, Y$ on $M$, we define the ``second derivative" $\mc{D}_{XY} \Phi$ by
\[ \mc{D}_{XY} \Phi = \mc{D}_X \paren{\mc{D}_Y \Phi} - \mc{D}_{D_X Y} \Phi \text{.} \]

Let $B \in \Gamma \ol{\mc{V}}$ satisfy the system of transport equations
\begin{equation}\label{eq.transport_pre_vb} \ol{\nasla}_f B = -\frac{1}{2} \brak{\vartheta \paren{\trace \chi} - \frac{2}{f}} B + \frac{\vartheta}{2} \left\langle B, P_4 \right| \text{,} \qquad \valat{B}_p = J \text{,} \end{equation}
where $J$ is element of the fiber at $p$ of $\mc{V}$.
We also define $A = f^{-1} B \in \Gamma \ol{\mc{V}}$, as well as the coefficients $\nu \in \Gamma (\ol{\mc{V}} \otimes \ol{\mc{V}})$, given by
\[ \nu = - \ol{\nasla}^a P_a + \frac{1}{2} \ol{\nasla}_4 P_3 + \zeta^a P_a + \frac{1}{4} \paren{\trace \ul{\chi}} P_4 + \frac{1}{4} \paren{\trace \chi} P_3 + \frac{1}{2} \brac{P_4, P_3} \text{,} \]
We have indexed with respect to arbitrary $f$-adapted null frames on $\mc{N}^-(p)$.

We can now finally state our extension of Theorem \ref{thm.ksp_gen}:

\begin{theorem}\label{thm.ksp_gen_ex}
Suppose $\mc{V}$ is a vector bundle over $M$, with compatible connection and bundle metric $\mc{D}$ and $\brcmp{\cdot, \cdot}$.
We assume all the objects derived from $\mc{V}$, $\mc{D}$, and $\brcmp{\cdot, \cdot}$ as defined in this section, and we define $\Phi$, $\Psi$, $P$, $B$, $A$, $\nu$ as in the above development.
If $p \in M$, and if the past regular null cone $\mc{N}^-(p)$ satisfy the same assumptions as in Theorem \ref{thm.ksp_gen}, then
\begin{equation}\label{eq.ksp_gen_vector} 4 \pi \vartheta_0 \brcmp{J, \valat{\Phi}_p} = \mf{F}\paren{p; v_0} + \mf{E}^1\paren{p; v_0} + \mf{E}^2\paren{p; v_0} + \mf{I}\paren{p; v_0} \text{,} \end{equation}
where
\begin{align*}
\mf{F}\paren{p; v_0} &= - \int_{\mc{N}^-\paren{p; v_0}} \brcmp{A, \Psi} \text{,} \\
\mf{E}^1\paren{p; v_0} &= - \int_{\mc{N}^-\paren{p; v_0}} \brak{\brcmp{\ol{\nasla}^a A, \ol{\nasla}_a \Phi} + \paren{\zeta^a - \ul{\eta}^a} \brcmp{\ol{\nasla}_a A, \Phi}} \text{,} \\
\mf{E}^2\paren{p; v_0} &= \int_{\mc{N}^-\paren{p; v_0}} \mu \brcmp{A, \Phi} + \frac{1}{2} \int_{\mc{N}^-\paren{p; v_0}} \brcmp{A, \mc{R}_{43}\brak{\Phi}} \\
&\qquad - \int_{\mc{N}^-\paren{p; v_0}} \paren{\brcmp{\ol{\nasla}^a A \mid P_a \mid \Phi} + \brcmp{A \mid \nu \mid \Phi}} \text{,} \\
\mf{I}\paren{p; v_0} &= -\frac{1}{2} \int_{\mc{S}_{v_0}} \paren{\trace \ul{\chi}} \brcmp{A, \Phi} - \int_{\mc{S}_{v_0}} \brcmp{A, \mc{D}_3 \Phi} - \frac{1}{2} \int_{\mc{S}_{v_0}} \brcmp{A \mid P_3 \mid \Phi} \text{.}
\end{align*}
Again, the error terms $\mf{E}^1(p; v_0)$ can be alternately expressed as
\[ \mf{E}^1\paren{p; v_0} = \int_{\mc{N}^-\paren{p; v_0}} \brcmp{\ol{\lasl} A + 2 \zeta^a \ol{\nasla}_a A, \Phi} \text{.} \]
\end{theorem}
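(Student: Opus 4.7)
The plan is to replicate, essentially line-by-line, the proof of Theorem \ref{thm.ksp_gen}, with every tensorial contraction replaced by the appropriate bilinear pairing from $\brcmp{\cdot,\cdot}$, $\left|\cdot,\cdot\right\rangle$, $\left\langle\cdot,\cdot\right|$, $\brcmp{\cdot\mid\cdot\mid\cdot}$, $\brac{\cdot,\cdot}$. The author already identified the precise structural reason this works: the derivation of Theorem \ref{thm.ksp_gen} uses only (i) Leibniz rules for the mixed covariant derivatives $\ol{\nasla}$, and (ii) compatibility of those derivatives with the metrics appearing in the contractions. Both properties are explicitly posited here through the compatibility hypothesis on $(\mc{D}, \brcmp{\cdot,\cdot})$ and its propagation to the induced pairings and connections on $\ol{\mc{V}}$, $\ol{\mc{V}} \otimes \ol{\mc{V}}$, and the various mixed bundles.

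First, I would record the vector-bundle versions of Lemma \ref{thm.int_parts_pnc_hor}, Corollary \ref{thm.int_parts_pnc_lasl}, and Lemma \ref{thm.int_parts_pnc_null}. Using the compatibility of $\ol{\mc{D}}$ with $\brcmp{\cdot,\cdot}$ (which transfers to the induced pairings), the same proofs as given in the paper go through verbatim with $S^I T_I$ replaced by $\brcmp{S,T}$. Next, I would derive the analogue of the wave operator expansion \eqref{eq.square_expand}, namely
\[
\Box^{\mc{D}}_g \Phi = \ol{\lasl} \Phi - \ol{\nasla}_4 (\mc{D}_3 \Phi) + 2 \ul{\eta}^a \ol{\nasla}_a \Phi - \tfrac{1}{2} (\trace \ul{\chi}) \ol{\nasla}_4 \Phi - \tfrac{1}{2} (\trace \chi) \mc{D}_3 \Phi + \tfrac{1}{2} \mc{R}_{43}[\Phi],
\]
whose proof is identical to that in Section 4 of the paper once $R_{43}[T]$ is replaced by $\mc{R}_{43}[\Phi]$ (this requires only the formal definition of $\mc{R}$ and the decomposition \eqref{eq.D_nf}, which is purely a spacetime statement and unchanged).

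Then I would set $\mc{N}_\epsilon = \mc{N}^-(p; \epsilon, v_0)$ and expand
\[
\mf{F}^{\mc{L}}_\epsilon = \int_{\mc{N}_\epsilon} \brcmp{A, \Psi} = \int_{\mc{N}_\epsilon} \brcmp{A, \Box^{\mc{D}}_g \Phi} + \int_{\mc{N}_\epsilon} \brcmp{A \mid P_\mu \mid \mc{D}^\mu \Phi},
\]
using the wave equation \eqref{eq.tensor_wave_vb}. Decomposing the first-order term in a null frame as in the expansion of $\mf{A}^6_\epsilon$ produces analogues of $\mf{C}^1_\epsilon, \mf{C}^2_\epsilon, \mf{C}^3_\epsilon$, where the bracket $\brac{\cdot,\cdot}$ naturally appears when two consecutive $P$-contractions are produced via the transport equation \eqref{eq.transport_pre_vb}. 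I then run the same sequence of integrations by parts as in Section 4: integrate $\ol{\lasl}\Phi$ using Corollary \ref{thm.int_parts_pnc_lasl}, integrate the $\ol{\nasla}_4$-pieces using Lemma \ref{thm.int_parts_pnc_null}, and use the transport equation \eqref{eq.transport_pre_vb} to cancel exactly the integrals over $\mc{N}_\epsilon$ involving $\mc{D}_3 \Phi$ (this is precisely the design of the $\left\langle B, P_4\right|$ term in \eqref{eq.transport_pre_vb}). Collecting the remaining bulk terms with the Ricci-coefficient identities \eqref{eq.null_torsion} and \eqref{eq.str_ev_chib_tr}, together with the definition of $\mu$ and the definition of $\nu$ in this section, regroups everything into $\mf{F}, \mf{E}^1, \mf{E}^2$, plus boundary terms on $\mc{S}_\epsilon$ and $\mc{S}_{v_0}$.

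The boundary integrals over $\mc{S}_{v_0}$ produce $\mf{I}(p;v_0)$ directly. For the vertex limits as $\epsilon \searrow 0$, I use $A = f^{-1} B$ and the initial conditions $B|_p = J$ together with Proposition \ref{thm.pnc_init}: the $\mc{D}_3 \Phi$ and $\brcmp{A\mid P_3\mid\Phi}$ boundary terms carry only a factor of $\epsilon^{-1}$ and vanish, while the $(\trace \ul{\chi}) \brcmp{A,\Phi}$ term, carrying $\epsilon^{-2} \cdot [\epsilon(\trace\ul{\chi})]$, yields exactly $-4\pi\vartheta_0 \brcmp{J, \Phi|_p}$ via \eqref{eq.pnc_init_chib} and \eqref{eq.pnc_init_int}. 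The alternative form of $\mf{E}^1(p;v_0)$ follows from one application of Corollary \ref{thm.int_parts_pnc_lasl}. The main obstacle is purely bookkeeping: verifying that the pairings $\brcmp{\cdot,\cdot}, \left|\cdot,\cdot\right\rangle, \left\langle\cdot,\cdot\right|, \brcmp{\cdot\mid\cdot\mid\cdot}, \brac{\cdot,\cdot}$ slot into each step in the only way that makes the algebra parse, and that the compatibility of $\ol{\mc{D}}$ with $\brcmp{\cdot,\cdot}$ really does legitimize every integration by parts — both points are guaranteed by the general framework set up in this section, so no new estimate or geometric input is needed beyond what was used for Theorem \ref{thm.ksp_gen}.
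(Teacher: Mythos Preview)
Your proposal is correct and matches the paper's approach exactly: the paper simply states that Theorem \ref{thm.ksp_gen_ex} is derived in a manner completely analogous to Theorem \ref{thm.ksp_gen}, replacing the compatibility properties of $\ol{D}$ and $\ol{\nasla}$ by the abstract compatibility properties of the bundle setting. Your write-up is in fact considerably more detailed than the paper's own two-sentence justification.
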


We can derive Theorem \ref{thm.ksp_gen_ex} in a manner completely analogous to Theorem \ref{thm.ksp_gen}.
We simply replace the compatibility properties of $\ol{D}$ and $\ol{\nasla}$ in the tensorial case by the abstract compatibility properties described above.

\subsection{Reduction to Previous Cases}

Note that we can obtain {\bf KRV} trivially from Theorem \ref{thm.ksp_gen_ex} simply by taking $P \equiv 0$.
Next, we will show how to Theorem \ref{thm.ksp_gen} can be recovered from Theorem \ref{thm.ksp_gen_ex}.

For this, we consider the direct sum vector bundle
\[ \mc{V} = T^{\sss{r}{1}} M \oplus \ldots \oplus T^{\sss{r}{n}} M \text{,} \]
with the associated metric and connection on $\mc{V}$ given naturally by
\begin{align}
\label{eq.mc_system} \brcmp{\paren{\sss{S}{1}, \ldots, \sss{S}{n}}, \paren{\sss{T}{1}, \ldots, \sss{T}{n}}} &= \sum_{m = 1}^n g\paren{\sss{S}{m}, \sss{T}{m}} \text{,} \\
\notag \mc{D} \paren{\sss{S}{1}, \ldots, \sss{S}{n}} &= \paren{D \sss{S}{1}, \ldots, D \sss{S}{n}} \text{.}
\end{align}
We then define the fundamental objects
\[ \Phi = \paren{\sss{\Phi}{1}, \ldots, \sss{\Phi}{n}} \text{,} \qquad \Psi = \paren{\sss{\Psi}{1}, \ldots, \sss{\Phi}{n}} \text{,} \]
and we similarly define $P$ from the component objects $\sss{P}{mc}$.
With these specific definitions, then the covariant wave equation \eqref{eq.tensor_wave_vb} coincides with \eqref{eq.tensor_wave_system}, and formula of Theorem \ref{thm.ksp_gen_ex} reduces to Theorem \ref{thm.ksp_gen}.

Note that the vector bundle formalism of \eqref{eq.ksp_gen_vector} automatically encapsulates the fact that Theorem \ref{thm.ksp_gen} deals with a system of tensor wave equations.
As a result, the parametrix {\bf KRV} can also deal with systems of covariant wave equations in this fashion.
What is entirely new with respect to \cite{kl_rod:ksp} is the presence of first-order terms, the resulting coupling, and the weakening of the assumptions.

\begin{remark}
Alternatively, one could possibly reinterpret the first-order coefficients $P$ by modifying the given connection $\mc{D}$ to also take into account contributions from $P$.
This would bring us closer to the case of {\bf KRV}.
However, such modified connections would generally fail to be compatible with the given metric, hence one would need to deal with error terms resulting from this.
\end{remark}

\subsection{Applications to General Relativity: Einstein-Yang-Mills Spacetimes}

We conclude this paper by briefly summarizing a couple of applications for which Theorem \ref{thm.ksp_gen_ex} could be helpful.
First, we can apply Theorem \ref{thm.ksp_gen_ex} to treat the analogue of Theorem \ref{thm.bdc_maxwell} for the Einstein-Yang-Mills setting.
This is quite analogous to the Einstein-Maxwell problem discussed in \cite{shao:bdc_nv}, except that the matter field $\mf{F}$ is now a $\mf{g}$-valued $2$-form, where $\mf{g}$ is the Lie algebra associated with the Yang-Mills theory.
In other words, we can think of $\mf{F}$ as a section of the bundle $T^2 M \otimes \mf{g}$.

Recall also that we can define ``gauge covariant" derivatives on such $\mf{g}$-valued tensor bundles, which we denote by $\mc{D}$.
Moreover, if we assume as is standard that $\mf{g}$ admits an invariant positive-definite scalar product $\brcmp{\cdot, \cdot}$, then we can combine $\brcmp{\cdot, \cdot}$ and $g$ to define bundle metrics on such $\mf{g}$-valued tensor bundles which are compatible with $\mc{D}$.
Hence, we retrieve a setting that can be described using the language of Theorem \ref{thm.ksp_gen_ex}.

Similar to the Einstein-Maxwell setting, we again have a system of two covariant wave equations in the Einstein-Yang-Mills analogue, one for the spacetime curvature $R$, and one for the derivative of the Yang-Mills curvature, $\mc{D} \mf{F} \in \Gamma (T^3 M \otimes \mf{g})$.
Algebraically, these equations are analogous in form to the system \eqref{eq.wave_curv_em} treated in the Einstein-Maxwell setting.
Unlike in \cite{shao:bdc_nv}, we cannot apply Theorem \ref{thm.ksp_gen} here, since the wave equation satisfied by $\mc{D} \mf{F}$ involve gauge covariant derivatives rather than the standard tensorial derivatives.
\footnote{In terms of the usual tensorial covariant derivative, such a wave equation for $\mf{F}$ would of course fail to be gauge-independent.}
We can, however, describe this setting in terms of Theorem \ref{thm.ksp_gen_ex}.
In fact, we simply take
\[ \mc{V} = T^4 M \oplus \paren{T^2 M \otimes \mf{g}} \text{,} \]
and we construct a natural connection and bundle metric on $\mc{V}$ from the above developments using the same method as in \eqref{eq.mc_system}.
Consequently, the system of wave equations described above is now of the form \eqref{eq.tensor_wave_vb}, and we can apply Theorem \ref{thm.ksp_gen_ex} in order to derive the necessary uniform bounds for $R$ and $\mc{D} \mf{F}$.

\subsection{Applications to General Relativity: Time Foliations}

In \cite{wang:ibdc, wang:tbdc}, and to a lesser extent in \cite{shao:bdc_nv}, one requires the observation that the second fundamental form $k$ of the given time foliation satisfies a covariant wave equation.
This can be cleanly described in terms of the language of this section.
Furthermore, with this characterization, one can more easily derive the energy estimates for $k$ associated with this wave equation, which are crucial in the above works.

In this setting, the spacetime $M$ is given as a smooth $1$-parameter family of spacelike hypersurfaces $\{\Sigma_\tau\}$, and $k$ denotes the second fundamental forms on the $\Sigma_\tau$'s.
Then, the bundle $\mc{V}$ under consideration here is the ``horizontal bundle" of all symmetric $2$-tensors in $M$ which are tangent to the $\Sigma_\tau$'s; note that $k \in \Gamma \mc{V}$.
For the bundle metric, we can simply consider the pullback $\gamma$ of $g$ to the $\Sigma_\tau$'s, i.e., the induced metrics of the $\Sigma_\tau$'s.
The compatible connection on $\mc{V}$ can be defined in the same manner as for the connections on the horizontal bundles of past null cones.

From the Einstein equations, one can see that $k$ satisfies a covariant wave equation of the form \eqref{eq.tensor_wave_vb}.
As a result, we can apply Theorem \ref{thm.ksp_gen_ex} to $k$ in this setting.
In fact, the application of a representation formula to $k$ is an essential component in the arguments of \cite{wang:ibdc, wang:tbdc}.
Note that in contrast to the previous example and to Theorem \ref{thm.ksp_gen}, the bundle metric in this case is in fact positive-definite.

Recall that in the case of scalar wave equations, one can derive energy estimates using energy-momentum tensors in a completely standard fashion.
Moreover, this methodology extends directly to the setting of an arbitrary vector bundle along with a connection and a compatible positive-definite bundle metric.
Therefore, by expressing $k$ in the form \eqref{eq.tensor_wave_vb}, then one can immediately derive associated global and local energy estimates for $k$.

\bibliographystyle{amsplain}
\bibliography{bib}

\end{document}